\title{The Bogomolov inequality on a singular complex space}
\author{Xiaojun WU}
\date{\today}
\newtheorem{mythm}{Theorem}
\newtheorem{mylem}{Lemma}
\newtheorem{myprop}{Proposition}
\newtheorem{mycor}{Corollary}
\newtheorem{mydef}{Definition}
\newtheorem{myrem}{Remark}
\begin{document}
\def \coh{\mathrm{coh}}
\def\cI{\mathcal{I}}
\def\Z{\mathbb{Z}}
\def\Q{\mathbb{Q}}  \def\C{\mathbb{C}}
 \def\R{\mathbb{R}}
 \def\N{\mathbb{N}}
 \def\H{\mathbb{H}}
  \def\P{\mathbb{P}}
 \def\cS{\mathcal{S}}
  \def\cC{\mathcal{C}}
  \def\cF{\mathcal{F}}
  \def\Hom{\mathrm{Hom}}
  \def\d{\partial}
 \def\dbar{{\overline{\partial}}}
\def\dzbar{{\overline{dz}}}
 \def\ii{\mathrm{i}}
  \def\d{\partial}
 \def\dbar{{\overline{\partial}}}
\def\dzbar{{\overline{dz}}}
\def \ddbar {\partial \overline{\partial}}
\def\cD{\mathcal{D}}
\def\cE{\mathcal{E}}  \def\cO{\mathcal{O}}
\def\P{\mathbb{P}}
\def\cI{\mathcal{I}}
\def \sim{\mathrm{sim}}
\def \reg{\mathrm{reg}}
\def \sing{\mathrm{sing}}
\def \an{\mathrm{an}}
\def \dR{\mathrm{dR}}
\def \top{\mathrm{top}}
\def \dim{\mathrm{dim}}
\def \Todd{\mathrm{Todd}}
\def \tors{\mathrm{Tors}}
\def \det{\mathrm{det}}
\def \rk{\mathrm{rk}}
\def \ch{\mathrm{ch}}
\def \cH{\mathscr{H}}
\def \ker{\mathrm{Ker}}
\def \rank{\mathrm{rank}}
\def \nn{\mathrm{nn}}
\def \im{\mathrm{Im}}
\def\cG{\mathcal{G}}
\def\cQ{\mathcal{Q}}
\maketitle
\begin{abstract}
In this note, we prove the Bogomolov inequality over a reduced, compact, irreducible, K\"ahler complex space that is smooth in codimension $2$.
The proof is obtained by a reduction to the smooth case, using Hironaka's resolution of singularities.
\end{abstract}
\section{Introduction}
In \cite{Bog78}, Bogomolov proved the celebrated inequality asserting that the discriminant
$$\Delta(E):=2rc_2(E)-(r-1)c^2_1(E)$$
of any rank $r$ vector bundle $E$ on a projective surface that is slope $H$-polystable with respect to an ample divisor $H$ is non-negative.
One analytic method of proof consists of using the existence of a Hermitian-Einstein metric on $E$.
This was proven by Simon Donaldson for projective algebraic surfaces \cite{Don85} and later for projective algebraic manifolds \cite{Don87}, by Karen Uhlenbeck and Shing-Tung Yau \cite{UY} for compact Kähler manifolds.

In the fundamental paper of Bando-Siu \cite{BS}, the following Bogomolov inequality for a coherent sheaf is proven.
\begin{mythm}
For a polystable reflexive sheaf $\cF$ over a compact K\"ahler manifold $(X, \omega)$, we have the Bogomolov inequality
$$\int_X (2r c_2(\cF)-(r-1)c_1(\cF)) \wedge \omega^{n-2} \geq 0$$
where $r$ is the generic rank of $\cF$.
Moreover, the equality holds if and only if $\cF$ is locally free and its Hermitian-Einstein metric (which exists by the work of Uhlenbeck-Yau \cite{UY}) gives a projectively flat connection.
\end{mythm}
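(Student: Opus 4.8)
The plan is to run the analytic argument via Hermitian--Einstein metrics, adapted to the reflexive setting. Since $\cF$ is reflexive on a complex manifold, it is locally free on the complement $X_0:=X\setminus\Sigma$ of an analytic subset $\Sigma$ with $\mathrm{codim}_{\C}\Sigma\geq 3$. Because $\cF$ is polystable it splits, off $\Sigma$ and hence everywhere by reflexivity, into stable summands of a common slope $\mu$; to each summand one attaches, following Bando--Siu's existence theorem, an \emph{admissible} Hermitian--Einstein metric on $X_0$, and the direct sum gives a smooth Hermitian metric $h$ on $\cF|_{X_0}$ whose Chern curvature $F_h$ satisfies $\ii\Lambda_\omega F_h=\lambda\,\mathrm{Id}_{\cF}$ (with $\lambda$ determined by $\mu$) and, crucially, $F_h\in L^2(X_0,\omega)$. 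I would either invoke this existence statement directly or reprove it by solving the Hermitian--Einstein equation on the incomplete manifold $X_0$ with the appropriate weights, deriving the $L^2$ curvature bound from an integral estimate that exploits $\mathrm{codim}_{\C}\Sigma\geq 3$; for the equality case one also uses that when $\Sigma=\varnothing$ this $h$ is the genuine Hermitian--Einstein metric furnished by Uhlenbeck--Yau.

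Granting this, the inequality follows from two facts. The first is a pointwise Bogomolov--L\"ubke identity on $X_0$. Writing $\widehat F_h:=F_h-\tfrac1r(\mathrm{tr}F_h)\,\mathrm{Id}_{\cF}$ for the trace-free part, the Hermitian--Einstein equation gives $\Lambda_\omega\widehat F_h=0$, so $\widehat F_h$ is a primitive $(1,1)$-form with values in the trace-free Hermitian endomorphisms of $\cF$. Expanding $2r\,c_2(\cF,h)-(r-1)c_1(\cF,h)^2$ in Chern--Weil forms, isolating the trace part of $F_h$, and applying the elementary algebraic identity for primitive $(1,1)$-forms, one obtains
$$\big(2r\,c_2(\cF,h)-(r-1)c_1(\cF,h)^2\big)\wedge\frac{\omega^{n-2}}{(n-2)!}=c\,r\,|\widehat F_h|_\omega^2\,\frac{\omega^n}{n!}\qquad\text{on }X_0$$
for a fixed positive constant $c$, which is pointwise non-negative and lies in $L^1(X_0)$ since $F_h\in L^2$. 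The second fact is a Chern--Weil comparison across $\Sigma$: because $\mathrm{codim}_{\C}\Sigma\geq 3$ and $F_h\in L^2$, the forms $c_1(\cF,h)$ and $c_2(\cF,h)$ extend by zero to closed currents on $X$, with $c_1(\cF,h)$ representing $c_1(\cF)$ exactly (the determinant of $\cF$ is a genuine line bundle whose induced metric extends across the codimension-$\geq 2$ set with no residue), so $\int_X c_1(\cF,h)^2\wedge\omega^{n-2}=\int_X c_1(\cF)^2\wedge\omega^{n-2}$, while for the second Chern class the trivial extension only loses a non-negative contribution supported on $\Sigma$, giving $\int_X c_2(\cF,h)\wedge\omega^{n-2}\leq\int_X c_2(\cF)\wedge\omega^{n-2}$; moreover this loss vanishes precisely when $h$ extends smoothly across $\Sigma$, i.e.\ precisely when $\cF$ is locally free.

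Combining the two facts, I would chain
$$\int_X\big(2r\,c_2(\cF)-(r-1)c_1(\cF)^2\big)\wedge\omega^{n-2}\ \geq\ \int_{X_0}\big(2r\,c_2(\cF,h)-(r-1)c_1(\cF,h)^2\big)\wedge\omega^{n-2}\ \geq\ 0,$$
which is the asserted inequality. Equality forces both steps to be equalities: the second forces $\widehat F_h\equiv 0$ on $X_0$, i.e.\ the Hermitian--Einstein connection of $\cF|_{X_0}$ is projectively flat; the first forces the $\Sigma$-defect of $c_2$ to vanish, hence $\cF$ to be locally free, so the projectively flat Hermitian--Einstein connection extends over all of $X$.

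I expect the main obstacle to be the analytic control near $\Sigma$. On the one hand one needs the a priori $L^2$ bound on the curvature of the admissible Hermitian--Einstein metric, so that all the Chern--Weil integrands above are honest $L^1$ objects; on the other hand one needs the Chern--Weil comparison itself, where the delicate point is to verify that cutting off near a codimension-$\geq 3$ analytic set contributes no boundary term to $\int_X c_1(\cF,h)^2\wedge\omega^{n-2}$ and only a favourably-signed term to the $c_2$-integral. By contrast, once the admissible metric and these two estimates are in hand, the L\"ubke identity is a purely pointwise algebraic computation and the final chaining, together with the equality analysis, is routine.
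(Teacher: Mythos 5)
First, a point of orientation: the paper does not prove this statement at all --- Theorem~1 is quoted from Bando--Siu \cite{BS} and used as a black box on the smooth model $\tilde X$ in the proof of the Main Theorem. So the only meaningful comparison is with the original Bando--Siu argument, and your sketch is essentially that argument: an admissible Hermitian--Einstein metric on the locally free locus $X_0=X\setminus\Sigma$ (one for each stable summand), the pointwise L\"ubke identity expressing the discriminant against $\omega^{n-2}$ as a non-negative multiple of $|\widehat F_h|^2$, and a Chern--Weil comparison across the codimension $\geq 3$ set $\Sigma$. For the inequality itself the outline is sound, granted the two hard analytic inputs you explicitly defer: the existence of the admissible metric with $L^2$ curvature, and the statement that its Chern--Weil forms, extended by zero, are closed currents computing $\int_X c_1(\cF)^2\wedge\omega^{n-2}$ and $\int_X c_2(\cF)\wedge\omega^{n-2}$. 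These are not routine; they are the content of Bando--Siu's paper.

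The genuine gap is in the equality case. You assert that the trivial extension of $c_2(\cF,h)$ ``only loses a non-negative contribution supported on $\Sigma$'' and that ``this loss vanishes precisely when $\cF$ is locally free.'' Neither half is justified, and this is not the mechanism of the actual proof: for the admissible metric of Bando--Siu the $ch_2$ Chern--Weil integral against $\omega^{n-2}$ computes the sheaf-theoretic quantity \emph{exactly}, so your first inequality is an equality and carries no information about local freeness, while a quantitative ``defect $>0$ unless $\cF$ is locally free'' statement would be a substantial theorem in its own right and cannot be taken for granted. The correct deduction runs the other way: equality forces $\widehat F_h\equiv 0$ on $X_0$, i.e.\ a flat $PU(r)$-structure on $X\setminus\Sigma$; since $\mathrm{codim}_{\C}\Sigma\geq 2$ one has $\pi_1(X\setminus\Sigma)=\pi_1(X)$, so the flat projective bundle (equivalently, after a local twist, a flat vector bundle) extends across $\Sigma$, the resulting locally free sheaf agrees with $\cF$ on $X\setminus\Sigma$, and reflexivity identifies $\cF$ with that extension. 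Without this extension-of-flat-structures step your argument only yields projective flatness on $X\setminus\Sigma$ and does not establish that $\cF$ is locally free.
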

A natural question is whether the same result holds over a singular compact K\"ahler space.

A first difficulty to be overcome in the singular case is to adopt a suitable cohomology theory to work with.
It is well-known that in general, the Poincaré lemma of the De Rham cohomology theory does not hold on a singular space.
In particular, the complex of smooth forms does not necessarily provide a resolution of the locally constant sheaf, and, as a consquence, the de Rham cohomology may be different from singular cohomology.

A second difficulty is to appropriately define Chern classes for a coherent sheaf over a singular complex space.
By an example of Voisin \cite{Voi},  over a smooth compact complex space,
a coherent sheaf does not necessarily admit a resolution by holomorphic vector bundles.
In particular, we cannot reduce the definition of Chern classes of coherent sheaves to the case of holomorphic vector bundles.
%A useful observation indicated to the author by Demailly is that, however, one can make use of the resolution of a coherent sheaf  by real analytic vector bundles, by viewing it as a coherent real analytic sheaf.
%This reduces the definition of Chern classes of a coherent sheaf to the definition of such classes for vector bundles, for instance with values in the singular cohomology.
A satisfactory theory of the Riemann-Roch-Grothendieck formula has been developed over complex spaces in terms of homology rather than cohomology groups.
For lower degrees and over mildly singular spaces, although the Poincaré duality does not necessarily hold, one can show that the homology Chern class of a coherent sheaf defined by the Riemann-Roch-Grothendieck formula over complex spaces is equal to the cap product of Chern classes in singular cohomology with the fundamental homology class of the complex space.
Philosophically, the main reason is that the difference is supported in an analytic set of higher codimension. Thus for lower degrees, the difference has to be trivial.
%The advantage of cohomological description is that we can define the Chern classes of all degrees in singular cohomology with the ring structure.
%In particular, the Chern classes of a coherent sheaf are well defined in singular cohomology over a compact normal complex space. 

Once Chern classes have been defined adequately, we take a resolution of singularities to reduce the proof of the Bogomolov inequality over a complex space smooth in codimension 2 to that of a smooth model. More precisely, we have obtained the following result.
Let $\pi: \tilde{X} \to X$ be a smooth model of $X$ such that $\pi^* \cF/ \tors$ is locally free by the work of \cite{Hir64}, \cite{GR}, \cite{Rie}, \cite{Ros}.
Here $\tors$ means the torsion part of the coherent sheaf.
Without loss of generality, we can assume that it is an isomorphism outside an analytic set of codimension at least 3.
\paragraph{}
\textbf{Main Theorem.} {\em
For a stable reflexive sheaf $\cF$ over a compact, irreducible, smooth in codimension 2 K\"ahler complex space $(X, \omega)$ (e.g. if there exists a boundary divisor $\Delta$ such that $(X, \Delta)$ has terminal singularities), we have the Bogomolov inequality
$$\langle \pi_* ((2r c_2(\pi^* \cF/ \tors)-(r-1)c_1^2(\pi^* \cF/ \tors))\cap [\tilde{X}]) , \omega^{n-2} \rangle \geq 0$$
where $r$ is the generic rank of $\cF$ and $\langle \bullet, \bullet \rangle$ is the pairing between $H_{2n-4}(X,  \R)$ and $H^{2n-4}(X,  \R)$.}
\paragraph{}
%If the space $X$ is locally irreducible, we can reformulate the result in terms of cohomology.
%\paragraph{}
%\textbf{Theorem B.}
%For a polystable reflexive sheaf $\cF$ over a compact, locally irreducible, K\"ahler complex space $(X, \omega)$ that is smooth in codimension $2$, we have the Bogomolov inequality
%$$\int_X (2r c_2(\cF)-(r-1)c_1(\cF)) \wedge \omega^{n-2}  \geq 0$$
%where $r$ is the generic rank of $\cF$.
%\paragraph{}
The strategy of the proof is as follows. The hypothesis implies that
$\pi^* \cF/ \tors$ is $\pi^*\omega$-stable.
By a result of Toma \cite{Tom16}, stability is an open condition.
In particular, let $\tilde{\omega}$ be a K\"ahler form on $\tilde{X}$. Then
$\pi^* \cF/ \tors$ is $(\pi^* \omega +\varepsilon \tilde{\omega})$-stable for $\varepsilon>0$ small enough.
Thus we have the Bogomolov inequality over $\tilde{X}$ with respect to $\pi^* \omega +\varepsilon \tilde{\omega}$.
The proof is completed by letting $\varepsilon \to 0$.

After the appearance of the current paper on Arxiv, the equality case of our Main Theorem is later studied in \cite{CGG22} following our approach.
\paragraph{}
In general, we expect the Bogomolov inequality to hold on a compact K\"ahler space with klt singularities. But this fact seems much harder to prove (cf. eg. \cite{GKKP11}, \cite{GK20}).
In particular, it is unclear how to define in singular cohomology appropriate ``orbifold" second Chern classes of a coherent sheaf over a compact normal complex space with klt singularities.
\paragraph{}
The organisation of the paper is as follows.
In the second section, for the convenience of readers, we recall some basic results of cohomology theory on (reduced) compact singular spaces.
In particular, we recall the functoriality of the morphism from de Rham cohomology of a complex space to its singular cohomology, a fact that is crucial for our arguments.
In the third section, we compare the ``homology" Chern classes of a coherent sheaf with the Todd class appearing in the Riemann-Roch-Grothendieck theory over singular spaces.
This allows us to prove the Main Theorem in Section 4.
%Using some ideas of Demailly, we prove Theorem B in Section 5, in terms of cohomology. At the end of the same section, we define the ``orbifold" second Chern class over a compact complex space with klt singularities inspired by \cite{GK20}.
%In the last section, we discuss the definition of movable cones over a compact irreducible reduced normal space, and show the existence of the Harder-Narasimhan filtration of a torsion-free coherent sheaf.

\textbf{Acknowledgement} I thank Jean-Pierre Demailly, my PhD supervisor, for his guidance, patience and generosity. 
I would like to thank Junyan Cao, Patrick Graf, Daniel Greb, Stéphane Guillermou,  Stefan Kebekus, Wenhao Ou, Mihai P\u{a}un, Thomas Peternell and Sheng Rao for some very useful suggestions on the previous draft of this work.
In particular, we thank Professeur Julien Grivaux to point out an error in the previous version.
I would also like to express my gratitude to colleagues of Institut Fourier for all the interesting discussions we had. This work is supported by the European Research Council grant ALKAGE number 670846 managed by J.-P. Demailly.
\section{Cohomology classes on a compact singular space}
Here we recall some basic results of cohomology theory on a (reduced) compact singular space.
Most results of this part are borrowed from Chapiter 7 of the book \cite{AG05}.
To fix the notation, 
we will denote $H^*_{\sim}$ for simplicial cohomology, $\check{H}^*_{}$ for \u{C}ech cohomology, $H^*_{\dR}$ for De Rham cohomology with respect to the complex of smooth forms, and $H^*_{ \sing}$ for singular cohomology defined by continuous cochains.
For $X$ smooth, it is well known that
$$\check{H}^*(X, \R) \simeq H^*_{\dR}(X, \R) \simeq H^*_{ \sing}(X, \R) \simeq H^*_{\sim}(X, \R).$$
For $X$ singular, the canonical map $H^*_{ \sing}(X, \R) \simeq H^*_{\sim}(X, \R)$ is still an isomorphism as a consequence of the triangulation theorem cited below.
%It is still hoped that $H^*_{\cC^0- \sing}(X, \R) \simeq H^*_{\cC^\infty- \sing}(X, \R)$ but we do not find a reference for that.
To compare the groups $H^*_{ \sing}(X, \R)$, $\check{H}^*(X, \R)$, we also need the triangulation theorem for real analytic varieties.

We recall the following Whitney type embedding theorem for real analytic spaces, due to  Acquistapace, Broglia, Tognoli \cite{ABT79}.
\begin{mythm}
Let $(X, \cO_X^{\R-\an})$ be a reduced real analytic, paracompact, connected space such that\break $\sup_{x \in X} \dim_{\R} T_{x,X} < \infty$.
Here $T_{x,X}$ is the Zariski tangent space at $x$.
Then $(X, \cO_X^{\R-\an})$ can be embedded real analytically in a Euclidean space.
\end{mythm}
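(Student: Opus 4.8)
The plan is to reduce the embedding problem to the existence of sufficiently many global real analytic functions, which I would obtain by passing to a complex analytic complexification and invoking Stein theory. The hypothesis $\sup_{x \in X} \dim_{\R} T_{x,X} < \infty$ is the crucial quantitative input: it bounds the local embedding dimension uniformly, so that near each point $X$ sits as a real analytic subset of some $\R^N$ with $N$ independent of the point, and it will ultimately let me cut an infinite-dimensional embedding down to a finite-dimensional one. First I would observe that paracompactness, together with connectedness and the local embeddability into a fixed $\R^N$, forces $X$ to be second countable (Lindel\"of), so that $X$ admits a countable, locally finite cover by charts $U_i$, each realized as a reduced real analytic subset of an open set in $\R^N$.

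Next I would construct a complexification. Gluing the local complexifications of the charts $U_i$ (each chart, being cut out by finitely many real analytic equations, complexifies to a complex analytic subset) produces a complex analytic space $\tilde{X}$ in which $X$ sits as the fixed locus of an antiholomorphic involution, at least in a neighborhood of $X$. The key structural fact I would use here is the coherence of the structure sheaf $\cO_X^{\R-\an}$ of a reduced real analytic space (Cartan), which guarantees that the local complexifications are well behaved and patch consistently. I would then appeal to the existence of a fundamental system of Stein open neighborhoods of $X$ inside $\tilde{X}$ --- the real analytic analogue of a Grauert tube --- so that one may work on a Stein complex space $\Omega \supset X$.

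On the Stein space $\Omega$, Cartan's Theorems A and B apply. Theorem B (vanishing of higher cohomology of coherent sheaves) together with Theorem A (global generation of stalks) provides, for each point $x$ and each prescribed finite jet of tangent data, global holomorphic functions on $\Omega$ realizing that data; restricting to $X$ yields global real analytic functions. Using these I would separate points (injectivity) and separate tangent directions (immersivity on the Zariski tangent spaces $T_{x,X}$). Because $\dim_{\R} T_{x,X} \le N$ uniformly, only finitely many functions are needed to control the differential at any given point. Assembling the countable family of separating functions produces a proper real analytic embedding of $X$ into $\R^{\infty}$ (a countable product, or $\ell^2$), where properness is arranged by including a real analytic exhaustion function among the coordinates.

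The final and most delicate step is the reduction from $\R^{\infty}$ to a genuine Euclidean space $\R^M$. Here I would run a Whitney-type general position argument: the set of linear projections $\R^{\infty} \to \R^M$ that fail to remain injective, or fail to stay immersive, is governed by the dimensions of the secant locus and of the bundle of tangent directions, and the uniform bound on $\dim_{\R} T_{x,X}$ keeps these auxiliary dimensions finite. Consequently, for $M$ large enough a generic projection preserves both injectivity and immersivity, giving the desired embedding into $\R^M$. I expect the main obstacle to be precisely this passage to finite dimension while retaining properness and injectivity simultaneously: the transversality/measure-theoretic argument that is standard for manifolds must be adapted to the stratified, singular setting, where one argues stratum by stratum and uses the bounded tangent dimension to control the dimension of the ``bad'' loci of projections. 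Securing the Stein complexification neighborhood in the presence of singularities is the other technically heavy ingredient.
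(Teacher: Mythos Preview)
The paper does not prove this theorem at all; it is stated as a known result due to Acquistapace, Broglia, and Tognoli \cite{ABT79} and is immediately used (via its corollary for compact complex spaces) as input for the triangulation discussion. There is therefore no ``paper's proof'' to compare your proposal against.

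For what it is worth, your outline is a reasonable sketch of the standard strategy and is in the spirit of \cite{ABT79} and of Grauert's earlier treatment of the smooth case: complexify, find a Stein neighborhood, and use Cartan's Theorems A and B to produce enough global functions. One remark on the route: rather than embedding first into an infinite-dimensional space and then running a Whitney projection argument stratum by stratum, the cleaner line is to apply the Remmert--Bishop--Narasimhan embedding theorem directly to the Stein complexification (the hypothesis on $\dim_{\R} T_{x,X}$ is precisely what guarantees finite embedding dimension there), obtaining a proper holomorphic embedding into some $\C^M$, and then restrict to $X$. This avoids the delicate ad hoc projection step you flag. But again, for the purposes of the present paper the theorem is quoted, not proved, so no argument is required here.
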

Notice that the dimension of the Zariski tangent space, which is locally defined by the rank of  the Jacobian of local generators, is upper semi-continuous.
By the above theorem, we obtain
\begin{mycor}
Let $(X, \cO_X)$ be a compact connected complex space.
Then it can be embedded real analytically in a Euclidean space.
\end{mycor}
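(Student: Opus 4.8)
The plan is to deduce the corollary from the preceding theorem of Acquistapace--Broglia--Tognoli by equipping the topological space underlying $X$ with an appropriate reduced real analytic structure. Recall that a complex space is by definition locally modelled on local models: each point $x \in X$ has an open neighbourhood $U$ carrying a closed holomorphic embedding $U \hookrightarrow V$ onto the common zero set of finitely many holomorphic functions $f_1, \dots, f_k$ on an open set $V \subset \C^N$, with $N = N(x)$. Writing each $f_j$ in terms of its real and imaginary parts produces $2k$ real analytic functions on $V \subset \R^{2N}$ with the same zero locus; since the transition maps between overlapping local models are biholomorphisms and hence in particular real analytic isomorphisms, these local pictures glue into a real analytic structure sheaf on $X$, and, after passing to its reduction, we obtain a reduced real analytic space $(X, \cO_X^{\R-\an})$ with the same underlying topological space $X$. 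A real analytic embedding of this space into some $\R^M$ is precisely the assertion of the corollary.

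It then remains to check the three hypotheses of the theorem for $(X, \cO_X^{\R-\an})$. Paracompactness holds since $X$ is compact and Hausdorff; connectedness is part of the assumption; so the only point that needs anything is the finiteness of $\sup_{x \in X} \dim_\R T_{x,X}$. Near the point $x$ above, $X$ is realised as a closed real analytic subspace of an open subset of $\R^{2N}$, so $\dim_\R T_{y,X} \le 2N$ for every $y$ in that neighbourhood; hence the function $y \mapsto \dim_\R T_{y,X}$ is locally bounded on $X$. As recorded immediately after the statement of the theorem, this function is moreover upper semicontinuous, being governed by the rank of the Jacobian of the local generators. An upper semicontinuous, locally bounded function on a compact space attains a finite maximum, so $\sup_{x \in X} \dim_\R T_{x,X} < \infty$ and the theorem applies.

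The only step that requires an argument is the uniform bound on the Zariski tangent dimension, and this falls out at once from combining the local embeddability built into the definition of a complex space with compactness and the semicontinuity statement already noted. Thus there is no genuine obstacle: the corollary is essentially a transcription of the theorem into the complex analytic framework, once one remembers that a complex space has an underlying reduced real analytic space with the same topology.
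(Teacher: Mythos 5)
Your argument is correct and is essentially the paper's own (the paper only records the upper semicontinuity of $x \mapsto \dim_{\R} T_{x,X}$ and invokes the theorem); you have merely spelled out the standard construction of the underlying reduced real analytic space and the compactness argument bounding the Zariski tangent dimension. No issues.
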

On the other hand, we have the following triangulation theorem due to Lojasiewicz for the semianalytic case \cite{Lo64} and to Hardt, Verona for the subanalytic case \cite{Ha74} \cite{Ve79}.
\begin{mythm}
Let $\{ B_\nu \}$ be a locally finite collection of subanalytic subsets of a finite dimensional affine space $M$.
There exist a locally finite simplicial complex $K$ homeomorphic to $M$ and a homeomorphism $\tau : K \to M$
such that:
\begin{enumerate}
\item  $\tau$
is subanalytic i.e. the graph of $\tau$ is subanalytic in $K \times M$.
\item for any $\sigma \in K$ an open simplex and any $B_\nu$, we have $\tau(\sigma) \subset B_\nu$ or $\tau(\sigma) \subset M \setminus B_\nu$.
\item for any $\sigma \in K$ an open simplex, $\tau(\sigma)$ is subanalytic in $M$.
\end{enumerate}
\end{mythm}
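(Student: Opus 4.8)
The plan is to prove this by induction on $n=\dim M$, following the classical strategy of Lojasiewicz \cite{Lo64} and Hardt--Verona \cite{Ha74}, \cite{Ve79}. The first move is to replace the given family by a locally finite subanalytic \emph{stratification} $\cS$ of $M$ compatible with all the $B_\nu$ (each $B_\nu$ being a union of strata) and satisfying Whitney's regularity conditions; the existence of such a stratification is one of the structure theorems for subanalytic sets, resting on Gabrielov's theorem that the complement of a subanalytic set is subanalytic, together with the Lojasiewicz inequality. It then suffices to triangulate $\cS$, i.e. to produce a locally finite simplicial complex $K$ and a subanalytic homeomorphism $\tau:K\to M$ carrying each open simplex homeomorphically onto a subanalytic subset of a single stratum; properties (1)--(3) are then immediate, (2) because each $B_\nu$ is a union of strata.

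For the inductive step I would choose linear coordinates in which the projection $p:M=\R^n\to\R^{n-1}$ is \emph{regular} for $\cS$: the images $p(S)$ of strata are subanalytic and, after a refinement, form a stratification $\cS'$ of $\R^{n-1}$, while over each $S'\in\cS'$ the cylinder $p^{-1}(S')=S'\times\R$ splits into the graphs of finitely many continuous subanalytic functions $\xi_1<\cdots<\xi_k$ on $S'$, the open bands between consecutive graphs, and the two unbounded bands — each piece being a stratum of a refinement of $\cS$. This subanalytic cylindrical decomposition follows from the one-variable structure of subanalytic sets (a subanalytic subset of $\R$ is a finite union of points and open intervals, and a subanalytic function of one variable is piecewise continuous and monotone), combined with curve selection and Lojasiewicz estimates to ensure the $\xi_i$ extend continuously over the frontier of $S'$. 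By the induction hypothesis there is a subanalytic triangulation $\tau':K'\to\R^{n-1}$ compatible with $\cS'$; over each open simplex $\sigma$ of $K'$ the part of $M$ lying over $\tau'(\sigma)$ is a cylinder $\tau'(\sigma)\times\R$ sliced by the pulled-back graphs, and one triangulates it by the standard coning construction — adjoin barycenters, pull back the graph functions, interpolate affinely in the fibre direction, and cone off the boundary — choosing the new vertices so that the local pieces glue into a global subanalytic homeomorphism $\tau:K\to M$.

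The technical heart, and the main obstacle, is the simultaneous construction of a regular projection together with its cylindrical decomposition: one must ensure at once that all strata project to subanalytic sets stratifying the base, that the slicing functions $\xi_i$ extend continuously over the closures of the base strata so that the pieces fit together into a genuine cell complex, and that local finiteness is preserved throughout the process. Controlling the $\xi_i$ near the frontier of a stratum is exactly where the Lojasiewicz inequality and curve selection are used, while producing a \emph{single} linear change of coordinates regular for the entire locally finite family is a genericity argument over the Grassmannian of directions. Once all these ingredients are built inside the subanalytic category, checking that $\tau$ is subanalytic and that each image $\tau(\sigma)$ is subanalytic is routine. Since only the existence statement is needed in the sequel, we will simply invoke the cited references rather than reproduce this argument.
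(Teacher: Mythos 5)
The paper offers no proof of this theorem: it is quoted as a known result, with the semianalytic case attributed to Lojasiewicz \cite{Lo64} and the subanalytic case to Hardt \cite{Ha74} and Verona \cite{Ve79}, and your proposal --- after outlining the standard inductive strategy of those references (compatible stratification, choice of a regular linear projection, cylindrical decomposition of each cylinder $p^{-1}(S')$ by graphs of continuous subanalytic functions, triangulation of the base by induction, and lifting by coning) --- likewise ends by deferring to the same references, so the two treatments agree. Your sketch is an accurate outline of the classical argument; the one step you should flag more carefully is that the image of a subanalytic set under a linear projection is subanalytic only when the projection is proper on that set (subanalyticity is \emph{not} preserved by arbitrary projections, unlike semialgebraicity), which is precisely why the cited proofs work locally, on relatively compact pieces, or after a compactification --- harmless in the paper's application, where $X$ is compact.
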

By this theorem, a compact complex space has
a triangulation, by taking $X$  as $\{ B_\nu \}$ with a real analytic embedding of $X$ in a Euclidean space.
In particular, $X$ is locally contractible since this is true for a simplicial complex.
Thus we have an isomorphism $H^*_{ \sing}(X, \R) \simeq \check{H}^*(X, \R)$.
In the following, we will identify the simplicial cohomology, the singular cohomology and the \u{C}ech cohomology under the above canonical isomorphisms.

In general, the Poincaré lemma of De Rham cohomology theory does not hold on a singular space; this means the complex of sheaves of smooth forms does not necessarily provide a resolution of the locally constant sheaf $\R$.
In particular, the (smooth) de Rham cohomology may be different from the other cohomology groups.
On the other hand, we have the following resolution by sheaves of subanalytic cochains which gives a flabby resolution of the locally constant sheaf $\R$.
In particular, any element of $\check{H}^*(X, \R)$ can be represented by a global subanalytic cochain.
Since the image of a subanalytic chain under a proper morphism is still subanalytic,
the pull back of a \u{C}ech cohomology class can be represented by the pull back of a global subanalytic cochain.

A K\"ahler form over a (possibly singular) complex space defines a cohomological class in $H^*_{\dR}(X, \R)$.
There exists a natural morphism between the de Rham cohomology and the hypercohomology of the complex of sheaves of subanalytic cochains. It is defined in the following way.

\begin{mydef}(Sheaves of subanalytic (co)chains)

Let $X$ be a real analytic space, $p$ be a non-negative integer. A $p$-subanalytic prechain on $X$  is a pair $(N, c)$, where $N$ is a locally closed subanalytic
subset of $X$ contained in $X$, $\dim N \leq p$, and $c \in H_p (N, \R )$ in the Borel-Moore homology.
Modulo certain equivalent relation, $p$-subanalytic chain on $X$ is defined which forms the $p$-th element in the complex of the subanalytic
chains.
This process can be sheafified to define the complex $(\cS_\bullet,\d)$ of the sheaves of the subanalytic
chains on $X$.
We denote $(\cS^\bullet,\delta)$ for the dual complex of $(\cS_\bullet,\d)$, the complex of the sheaves of the subanalytic
cochains on $X$.
\end{mydef}
The main results that we need in \cite{BH69}, \cite{DP77} can be summarized
in the following  theorem.
\begin{mythm}
Let $X$ be a compact real analytic space. Then
\begin{enumerate}
\item The sheaves $\cS_p$ are soft and the sheaves $\cS^p$ are flabby.
\item $(\cS^\bullet,\delta)$ is a resolution of the locally constant sheaf $\R$.
\item There are natural isomorphisms
$$H_p(X, \R) \simeq H_p (\Gamma(X, \cS_\bullet))$$
and the pairing between the $S^\bullet$ and $S_\bullet$ induces a non-degenerate pairing between $H_p(X, \R)$ and $H^p(X, \R)$.
\end{enumerate}
\end{mythm}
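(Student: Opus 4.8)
The plan is to recognise $(\cS^\bullet,\delta)$ as a flabby resolution of the constant sheaf $\underline{\R}$ and $(\cS_\bullet,\partial)$ as a soft complex whose hyperhomology computes Borel--Moore homology, and then to read off the pairing from the tautological duality between the two complexes, together with the fact, established above, that $X$ is triangulable and locally contractible. For the softness of $\cS_p$ one argues that a germ of a subanalytic $p$-chain along a closed set $A$, represented by a subanalytic chain $(N,c)$ on some neighbourhood of $A$, extends to a global subanalytic chain agreeing with it near $A$: one truncates the carrier $N$ against a closed subanalytic neighbourhood of $A$ contained in the given one (such neighbourhoods exist by the triangulation), using only the elementary operations available on subanalytic chains --- restriction to subanalytic subsets and discarding lower-dimensional pieces. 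This makes $\Gamma(X,\cS_p)\to\Gamma(A,\cS_p)$ surjective, i.e. $\cS_p$ soft. The flabbiness of $\cS^p$ is then essentially formal: by definition its sections over an open $U$ form the $\R$-linear dual of the space of $p$-chains carried in $U$, the restriction of cochains is dual to the inclusion of chains carried in a smaller open set, and the dual of an injection of $\R$-vector spaces is split surjective; hence $\Gamma(X,\cS^p)\to\Gamma(U,\cS^p)$ is onto.

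To see that $(\cS^\bullet,\delta)$ resolves $\underline{\R}$ it suffices to check that the complex of stalks is exact at each $x$, i.e. that $\varinjlim_{U\ni x}H^p(\Gamma(U,\cS^\bullet))$ equals $\R$ if $p=0$ and vanishes if $p>0$. By the local conic structure of subanalytic sets --- concretely, by taking for $U$ the open star of $x$ in a triangulation, which is a cone with apex $x$ --- one constructs on such $U$ an explicit contracting homotopy by coning chains off to the apex; this is legitimate in the subanalytic category because the closed cone over a subanalytic set is subanalytic. Dualising the homotopy (equivalently, using that $\Hom_\R(-,\R)$ is exact over the field $\R$) contracts $\Gamma(U,\cS^\bullet)$ in positive degrees, while in degree $0$ a $\delta$-closed cochain is a locally constant function since $X$ is locally connected. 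Thus $(\cS^\bullet,\delta)$ is a flabby, hence $\Gamma(X,-)$-acyclic, resolution of $\underline{\R}$, and we obtain natural isomorphisms $H^p(\Gamma(X,\cS^\bullet))\simeq H^p(X,\underline{\R})\simeq H^p_{\sing}(X,\R)$, the last one by triangulability.

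Dually, $\cS_\bullet$ is a bounded complex of soft sheaves, and the same local conic computation shows that its hyperhomology computes the Borel--Moore homology of $X$; since $X$ is compact we have $\Gamma(X,-)=\Gamma_c(X,-)$ and soft sheaves are $\Gamma_c$-acyclic, so $H_p(\Gamma(X,\cS_\bullet))\simeq\mathbb{H}_p(X,\cS_\bullet)\simeq H^{\mathrm{BM}}_p(X,\R)=H_p(X,\R)$, the last equality again because $X$ is compact. Finally, on the compact space $X$ the cochain complex $\Gamma(X,\cS^\bullet)$ is literally the termwise $\R$-linear dual of $\Gamma(X,\cS_\bullet)$; as $\Hom_\R(-,\R)$ is exact, the evaluation pairing of complexes induces an isomorphism $H^p(\Gamma(X,\cS^\bullet))\simeq\Hom_\R(H_p(\Gamma(X,\cS_\bullet)),\R)$, which in view of the previous two steps is precisely a non-degenerate pairing between $H^p(X,\R)$ and $H_p(X,\R)$.

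The delicate point is the Poincaré lemma: promoting the topological ``coning to the apex'' to an honest chain homotopy internal to the subanalytic category requires keeping the carriers subanalytic and controlling the Borel--Moore classes under the cone construction. This is where the geometry of subanalytic sets is genuinely used; the surrounding sheaf-theoretic bookkeeping, by contrast, is formal.
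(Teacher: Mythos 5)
The paper offers no proof of this theorem: it is stated explicitly as a summary of results from \cite{BH69} and \cite{DP77}, and the text simply cites those references. Your sketch is therefore not competing with an argument in the paper but reconstructing the one in the literature, and it does so faithfully: softness of $\cS_p$ by truncating carriers against closed subanalytic neighbourhoods, flabbiness of $\cS^p$ as the dual of the injection of compactly supported chains, the Poincar\'e lemma via the local conic structure and coning to the apex, acyclicity of soft/flabby sheaves to identify the (co)homology of global sections with sheaf (co)homology, and the universal coefficient theorem over the field $\R$ for the non-degenerate pairing. This is exactly the architecture of Bloom--Herrera's proof. Two points deserve the care you only gesture at: first, the identification $\Gamma(U,\cS^p)\simeq\Hom_\R(\Gamma_c(U,\cS_p),\R)$ (with \emph{compactly supported} chains) is what makes the restriction maps dual to injections and hence $\cS^p$ flabby, and on compact $X$ it is what makes the global cochain complex the literal termwise dual of the global chain complex --- this should be stated as the definition of the dual complex rather than derived; second, as you note, verifying that the cone over a subanalytic carrier is subanalytic and that the coning operation is compatible with the Borel--Moore classes and with $\partial$ is the genuinely geometric content, and is precisely what \cite{BH69} and \cite{DP77} supply. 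With those caveats the sketch is correct.
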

Let $f: X \to Y$ be a morphism between compact real analytic spaces.
If $(N, c)$ is a subanalytic $p$-prechain on
$X$, $f(N)$ is a subanalytic closed subset of $Y$ and the push-forward $f_* c$ is a class in
$H_p (f (N ),\R)$, hence $(f (N ), f_* c)$ is subanalytic $p$-prechain on $Y$; the construction
passes to subanalytic $p$-chains commuting with taking boundaries.
By duality, the pull back of cohomology $f^*: H^p(Y,\R) \to H^p(X, \R)$ is induced by the pull back of global sections of $S^p$.

As in the smooth case, each closed differential form on a complex space defines an element in de Rham cohomology. 
We start by recalling the definition of smooth forms over a (possible singular) complex space
(see e.g. \cite{Dem85}).
\begin{mydef}
Since the definition of smooth forms is local in nature, without loss of generality, we can assume that $X$ is contained in an open set $\Omega \subset \C^N$.
The space of $(p,q)$-forms on $X$ (or $d$-forms on $X$) is defined to be the image of the restriction morphism
$$\cC^\infty_{p,q}(\Omega) \to \cC^\infty_{p,q}(X_\reg)~~~(\mathrm{resp.} \;\cC^\infty_{d}(\Omega) \to \cC^\infty_{d}(X_\reg)).$$
\end{mydef}
\begin{myrem}
{\em 
Let $F: X \to Y$ be a morphism of complex spaces.
Then we have a pull back morphism $F^*: \cC^\infty_{p,q}(Y) \to \cC^\infty_{p,q}(X)$.
In fact, it is checked in Lemma 1.3 of \cite{Dem85} that any local embedding $Y \cap \Omega \subset \Omega \subset \C^N$ and $\alpha \in \cC^\infty_{p,q}(\Omega)$ such that $\alpha|_{Y_\reg}=0$, we have $F^* \alpha|_{X_\reg \cap F^{-1}(\Omega)}=0$. 
}
\end{myrem}
Now we show that the natural morphism between the de Rham cohomology and the hypercohomology of the complex of sheaves of subanalytic cochains is functorial with respect to the pull back by a proper morphism.
Recall first the integration of forms on subanalytic chains.
Let $(N, c)$ be a $p$-subanalytic prechain of a compact complex space $X$ (hence real analytic) and $\omega$ a $p$-differential form on $X$. Let $N^*$ be the set of $p$-regular
points of $N$ (which is open and dense in~$N$) , $i : N^* \to X$ be the embedding.
Define
$$\int_{(N,c)} \omega \coloneqq \int_{N^*} i^* \omega$$
which converges by \cite{DP77}.
Moreover the Stokes formula holds for subanalytic chains.

In particular, the integration of forms on subanalytic chains induces a morphism
$H^p_{dR}(X, \R) \to \Hom(H_p (\Gamma(X, \cS_\bullet)), \R)=H^p (\Gamma(X, \cS^\bullet))=H^p_{\sing}(X,\R). $
We have that
\begin{myprop}
Let $f: X \to Y$ be a morphism between compact complex spaces.
The following diagram commutes
\[
\begin{tikzcd}
H^p_{dR}(Y, \R) \arrow{r}{} \arrow{d}{f^*} & H^p_{\sing}(Y,\R) \arrow{d}{f^*} \\%
H^p_{dR}(X, \R) \arrow{r}{}& H^p_{\sing}(X,\R)
\end{tikzcd}
\]
\end{myprop}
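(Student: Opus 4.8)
The plan is to reduce the asserted commutativity to a single identity relating pullback of forms and pushforward of subanalytic chains, and then to prove that identity by a stratified change-of-variables computation in the subanalytic category.

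First I would record that both horizontal arrows are induced by morphisms of complexes. Sending a smooth $p$-form $\omega$ on $X$ to the subanalytic $p$-cochain $(N,c)\mapsto\int_{(N,c)}\omega$ defines a map from the complex of smooth forms on $X$ to $\Gamma(X,\cS^\bullet)$; by the Stokes formula for subanalytic chains it is compatible with the differentials, and on cohomology it is exactly the map $H^p_{dR}(X,\R)\to H^p_{\sing}(X,\R)$ of the statement. The left vertical arrow is induced by the pullback of smooth forms $f^*\colon\cC^\infty_\bullet(Y)\to\cC^\infty_\bullet(X)$, which is well defined on complex spaces by the Remark above; the right vertical arrow is induced by the cochain map dual to the pushforward of subanalytic chains $(N,c)\mapsto(f(N),f_*c)$ recalled above. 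Since a commutative square of complexes induces a commutative square on cohomology, it suffices to show that these maps commute already at the level of forms and chains, i.e.\ that for every smooth $p$-form $\omega$ on $Y$ and every $p$-subanalytic chain $(N,c)$ on $X$,
$$\int_{(N,c)} f^*\omega \;=\; \int_{(f(N),\,f_*c)} \omega. \qquad (\star)$$

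To prove $(\star)$ I would expand both sides through the definition of integration over subanalytic chains and compare them on a common subanalytic stratification of $X$ adapted to $N$ and of $Y$ adapted to $f(N)$, chosen compatibly with $f$ so that $f$ maps each stratum of the $p$-regular locus $N^*$ onto a stratum of its image by a map of constant rank. If $\dim f(N)<p$, then the Borel--Moore homology group $H_p(f(N),\R)$ vanishes, so $f_*c=0$ and the right-hand side is $0$, while $f$ then has rank $<p$ on every stratum of $N^*$, whence $i_N^*(f^*\omega)$ vanishes on $N^*$ and the left-hand side is $0$ too. If $\dim f(N)=p$, the strata of $N^*$ on which $\rk df<p$ again contribute $0$ to the left-hand side, and on the remaining strata $f$ is a local diffeomorphism onto an open piece of the regular locus $(f(N))^*$, hence a finite covering onto a dense open $V\subset(f(N))^*$ away from a lower-dimensional branch set. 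The ordinary change-of-variables formula on oriented manifolds rewrites the contribution of these strata as $\int_V i_{f(N)}^*\omega$ weighted by the signed number of sheets of this covering, and this signed sheet-count is precisely the local coefficient representing the Borel--Moore class $f_*c$ over $V$. Adding up the contributions and discarding the lower-dimensional loci, which are of measure zero for the integrands, yields $(\star)$.

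The step I expect to be the main obstacle is the matching carried out at the end: showing that the local covering degree of $f$ over $V$ really equals the coefficient with which $f_*c$ is represented there, compatibly with the additivity of both integration and Borel--Moore homology over the stratification. This rests on the facts that pushforward in Borel--Moore homology commutes with restriction to open subsets and multiplies the orientation class of a finite covering by its signed number of sheets, together with the point --- supplied by the Remark above --- that $f^*\omega$, a priori only the restriction to $X$ of an ambient smooth form, pulls back consistently to every submanifold of the stratification. The convergence of the integrals and the Stokes formula invoked in the first paragraph are already available from \cite{DP77}.
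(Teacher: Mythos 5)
Your reduction of the commutativity of the square to the single identity $\int_{(N,c)} f^*\omega = \int_{(f(N),\,f_*c)} \omega$ is exactly the paper's proof, which consists of one sentence asserting precisely that identity. Your further stratified change-of-variables argument establishing the identity itself goes beyond what the paper records (it is left to the cited references on subanalytic chains), and the sketch --- splitting into the cases $\dim f(N)<p$ and $\dim f(N)=p$ and matching the signed covering degree with the Borel--Moore pushforward coefficient --- is the standard and correct way to do it.
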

\begin{proof}
If follows from the fact that for any closed form $\alpha$ on $Y$, for any $p$-subanalytic prechain $(N,c)$ on $Y$,
$\int_{f_*(N,c)} \alpha=\int_{(N,c)}f^* \alpha$. 
\end{proof}
\begin{myrem}
{\em
By \cite{Her67}, the morphism $H^p_{dR}(Y, \R)  \to H^p_{\sing}(Y,\R) $ for compact real analytic set $Y$ is always surjective and he gives an example where it is not an isomorphism.
For the simplicity of the reader, we recall briefly the elegant arguments for the surjectivity.
By Theorem 4, the cohomology sheaves of the complex $\cS^\bullet$ satisfy that $\cH^q(\cS^\bullet)=0$ for $q \geq 1$ and that $\cH^0(\cS^\bullet)=\R$.
On the other hand, $\cH^0(\cC^\infty_\bullet)=\R$.
Thus for any $p,q$, $H^p(X, \cH^q(\cS^\bullet)) \to H^p(X, \cH^q(\cC^\infty_\bullet))$ is surjective.
Taking the limit of spectral sequence proves the surjectivity.

Notice that if $X$ is smooth, the natural inclusion of smooth forms into currents induces an isomorphism between the de Rham cohomology defined by forms and the de Rham cohomology defined by currents.
However, if $X$ is singular, this is not always true.
The following example is borrowed from \cite{BH69}.
Let $X$ be the singular curve defined by the compactification of the image of $f:\C \to \C^2 $ sending $t$ to $(z_1,z_2)=(t^5, t^6+t^7)$ (homeomorphic to $\P^1$).
The regular set is $X \setminus \{(0,0)\}$.%which is the image of $\C^*$.
Thus $X$ is locally irreducible.
However, $\cH^1(\cC^\infty_\bullet) \neq 0$.
For example, $z_1 dz_2$ (and its complex conjugate) is not exact as germ at $(0,0)$.
Otherwise, locally, $z_1dz_2=d\varphi$ for some germ of smooth function $\varphi$ at $(0,0)$.
$f^* (z_1dz_2)$ is closed near $0$ and thus $f^*(z_1dz_2)=d \psi$ for some germ of smooth function $\psi$ at $0$.
Since $d(f^* \varphi-\psi)=0$ near $0$, $f^*(\varphi)-\psi=\varphi(t^5, t^6+t^7)-\frac{6t^{11}}{11}-\frac{7t^{12}}{12}$ is a constant which is impossible by considering the Taylor development at $0$.
The spectral sequence calculation then shows that the natural map
$$H^1_{\dR}(X, \R) \ncong H^1_{\sing}(X, \R).$$
The following diagram commutes
\[
\begin{tikzcd}
{H^1_{\cD-dR}(X, \R)} \arrow[r] \arrow[d] & {H^1_{\sing}(X, \R)} \arrow[ld] \\
{H^{1}_{\cD'-dR}(X, \R),}                  &                                
\end{tikzcd}
\]
where $H^1_{\cD-dR}(X, \R)$ is the de Rham cohomology defined by forms and $H^1_{\cD'-dR}(X, \R)$ is the de Rham cohomology defined by currents. 
Since the arrow on the first line is surjective but is not an isomorphism, $H^1_{\cD-dR}(X, \R) \to H^1_{\cD'-dR}(X, \R)$ is not an isomorphism.

The factorisation of the morphism $H^1_{\cD-dR}(X, \R) \to H^{1}_{\cD'-dR}(X, \R)$ is constructed as follows.
Let $\pi: \tilde{X} \to X$ be the normalisation of $X$ which is also a desingularisation of $X$ for dimension reason.
We have the following factorisation
\[
\begin{tikzcd}
{H^1_{\cD-dR}(X, \R)} \arrow[d] \arrow[r] & {H^1_{\cD'-dR}(X, \R)}    \\
{H^1_{\cD-dR}(\tilde{X}, \R)} \arrow[r]           & {H^1_{\cD'-dR}(\tilde{X}, \R)}                    \arrow[u] &                      
\end{tikzcd}
\]
Here the vertical arrows are the pull-backs.
The horizontal arrows are induced by the inclusion of forms into currents.
Notice the bottom arrow is isomorphic to $H^1_{\sing}(\tilde{X}, \R) \simeq H^1_{\cD-dR}(\tilde{X}, \R) \simeq H^1_{\cD'-dR}(\tilde{X}, \R)$
since $\tilde{X}$ is a smooth manifold.
Since $X$ is locally irreducible, the normalisation $\tilde{X}$ is homeomorphic to $X$ which implies that
 $H^1_{\sing}(\tilde{X}, \R) \simeq H^1_{\sing}(X, \R)$.
}

\end{myrem}

Note that the space of smooth forms over a compact complex space is a Fr\'echet space.
Since the space is endowed with the quotient topology, it is enough to show that the kernel of $i^*:\cC^\infty_{p,q}(\Omega) \to \cC^\infty_{p,q}(X_\reg)$ is closed where $i$ is the local embedding.
On the other hand, for $\alpha \in \cC^\infty_{p,q}(\Omega)$, $i^* \alpha=0$ if and only if $\alpha \wedge [i(X)]=0$ as a current which is a closed condition.
Then by Lemma 1 \cite{Ser55}, we have isomorphisms
for any compact complex space $X$ of pure dimension $n$, for any $p$,
$$H^p_{\cD-dR}(X, \C)^* \simeq H^{2n-p}_{\cD'-dR}(X, \R)$$
where $H^p_{\cD-dR}(X, \C)^*$ is the topological dual space.

\begin{myrem}
{\em 
Since we have natural factorisation $H^1_{\cD-dR}(X, \C) \to H^1_{\cD-dR}(X, \C)^* \to H^1_{\cD'-dR}(X, \C)$, the non-isomorphism in the above example is equivalent to the failure of Poincaré duality for this example.
The natural Poincar\'e pair has factorisation
$H^1_{\cD-dR}(X, \C) \times H^1_{\cD-dR}(X, \C) \to H^1_{}(X, \C) \times H^1_{}(X, \C) \to \C$.
Since $X$ is homeomorphic to a complex manifold, the second morphism is non degenerated.
The non-surjectivity of $H^1_{\cD-dR}(X, \C) \to H^1_{}(X, \C)$ implies that
the Poincar\'e pair
$H^1_{\cD-dR}(X, \C) \times H^1_{\cD-dR}(X, \C) \to \C$ is however degenerated.
}
\end{myrem}

In the sequel, a complex space $X$ is called Kähler if there exists a smooth $(1,1)$-form on $X$ that is locally the restriction of a Kähler form with respect to a local embedding in an open subset of~$\C^N$.

\section{Homology Chern classes on a compact singular space}
In this section, we give the construction of the homology Chern class (and Chern polynomials) on a compact complex space from the theory of Riemann-Roch-Grothendieck formula over a singular space.
The Riemann-Roch-Grothendieck formula over a singular space is proven by Baum, Fulton and MacPherson \cite{BFM75}, \cite{BFM79} in the projective case and by Levy \cite{Lev87} in the complex analytic case.

Let $K_0(X)$ be the Grothendieck ring of coherent sheaves over $X$ and $K^0(X)$ be the Grothendieck ring of holomorphic vector bundles over $X$.
By an example of Voisin \cite{Voi}, even over smooth compact complex space, these groups are  not necessarily isomorphic.
Even if the space is singular, the Leray-Hirsch theorem defines the Chern classes of a vector bundle in singular cohomology.
Another way to define these is to define the Chern classes of a vector bundle $E$ over $X$ as the pull back of elementary symmetric polynomials which generate the cohomology ring of $G(\rank\;E)$ under the natural morphism from $X$ to the infinite Grassmannian of $(\rank\; E)$-dimensional complex vector spaces $G(\rank \;E)$.
However the definition of Chern classes of a coherent sheaf in this case is much more involved.

To catch only the topological information, the Chern character functor over a compact complex space is defined as follows.
Fix a topological closed embedding $X \subset \C^N $ into some euclidean space such that $X$ is locally contractible (e.g. by the embedding theorem in the previous section).
Let $K^0_\top(X)$ be the
Grothendieck group of topological vector bundles on $X$ and $K_0^\top(X)$ be the Grothendieck group of complexes of vector bundles
on $\C^N$ exact off $X$.
Classically, we have a Chern character functor from $K^0_\top(X)$ to $H^{2*}(X ;\R)$ as recalled above.
By the work of \cite{Lev87}, there exists a group
homomorphism $\alpha: K_0(X) \to K_0^\top(X)$.
For a coherent sheaf $\cF$ over $X$, $\alpha(\cF)$  is given by a complex of vector bundles $E^\bullet$.
To define the Chern character functor $K_0^\top(X) \to H_{2*}(X ;\R)$, we consider the class of $\sum_i(-1)^i \ch(E^i)  \in H^{2*}(\C^N ;\R)$ whose restriction to $\C^N \setminus X$ is 0 since the complex is exact off $X$. 
Since $\C^N$ is orientable and $X$ is compact, locally contractible, the isomorphsim
$H^*(\C^N, \C^N \setminus X, \R) \simeq H_*(X, \R)$ defines the Chern character class (by e.g. Proposition 3.46 in \cite{Hat} and universal coefficient theorem).

The reformulation in the theory of Riemann-Roch-Grothendieck formula over a compact complex space is a natural transformation of functors $\tau :K_0(X) \to H_{2*}(X ;\R)$ such that for any compact complex space $X$ the diagram 
\[
\begin{tikzcd}
K^0(X) \otimes K_0(X) \arrow{r}{\otimes} \arrow{d}{(ch, \tau)} & K_0(X) \arrow{d}{\tau} \\%
H^{2*}(X,\R) \otimes H_{2*}(X,\R)  \arrow{r}{\cap}& H_{2*}(X,\R)
\end{tikzcd}
\]
commutes.
Naturality means that for each proper morphism $f:X \to Y$ of complex spaces the diagram
\[
\begin{tikzcd}
K_0(X) \arrow{r}{\tau} \arrow{d}{f_!} & H_{2*}(X,\R) \arrow{d}{f_*} \\%
K_0(Y) \arrow{r}{\tau}& H_{2*}(Y,\R)
\end{tikzcd}
\]
commutes where $f_!(\cF)=\sum_i R^if_* \cF$ for any coherent sheaf $\cF$ on $X$.
We will denote by $\tau(\cO_X)=\sum_i \tau_i(\cO_X)$ the decomposition of $\tau(X)$ in its homogeneous degree components.
In particular, if $X$ is smooth, $$\tau(\cO_X) = \Todd(X) \cap [X],$$ where $[X]$ is the fundamental homology class of $X$. 
To relate the Chern character functor and the Riemann-Roch-Grothendieck formula, we notice that the Riemann-Roch map $\tau: K_0(X) \to H_{2*}(X,\R)$ factorises through $K_0^\top(X)$.
%More precisely, $\tau(\cF)=\ch(\alpha(\cF))$.

The functor $\tau$ can be seen a twisted version of the Chern character, such that the Riemann-Roch-Grothendieck formula holds.
If $E$ is a holomorphic vector bundle over a compact complex manifold, we have
$$\tau(E)=\tau(\cO_X) \cap \ch(E)=\ch(E) \cup \Todd(X) \cap [X].$$
In lower degrees,
$$\tau_{2n}(E)=\rk(E)[X], \tau_{2n-2}(E)=(\frac{\rk(E)}{2} c_1(X)+c_1(E)) \cap [X],$$
$$ \tau_{2n-4}(E)=(\frac{\rk(E)}{12} (c^2_1(X)+c_2(X))+\frac{1}{2}c_1(E) c_1(X)+\frac{1}{2}(c_1^2(E)-c_2(E))) \cap [X].$$
In particular, we have
$$c_1(E) \cap [X]=\tau_{2n-2}(E)-\tau_{2n-2}(\cO_X).$$
By the above equality, we can define the homology first Chern class.
We need some preparatory results on the Grothendieck group to do this.
For more information on the Grothendieck ring of coherent sheaves over complex manifold, we refer to the paper \cite{Gri} and to Section 65.14 of \cite{St}.

Let $Z$ be a closed analytic set of some compact complex space $X$.
Let $\coh_Z(X)$ be the set of coherent sheaves on $X$ supported in $Z$.
We have the following equivalent definition.
$$\coh_Z(X)=\{\cF \in \coh(X); \exists N >>0, \cI_Z^N \cF=0\}.$$
It is easy to see that the right-handed side set is contained in $\coh_Z(X)$.
Now we prove the converse inclusion.
Let $i: Z \to X$ be the closed immersion of $Z$ into $X$ with a reduced structural sheaf.
In general, we have a natural morphism for any coherent sheaf $\cF$ on $Z$,
$i^* i_* \cF \to \cF$.
Since $Z$ is closed, by definition, we can easily check that the natural morphism $i^{-1} i_* \cF \to \cF$ is always surjective (which is, in fact, an isomorphism).
Here we denote $i^{-1}$, the inverse image functor in the category of sheaves of abelian groups and $i^*$, the inverse image functor in the category of coherent sheaves.
Since $i^* i_* \cF=i^{-1} i_* \cF \otimes_{i^{-1} \cO_X} \cO_Z$, $i^{-1} \cO_X  \to \cO_Z$ is surjective and taking tensor product is right exact,
the map from $i^{-1} i_* \cF$ to $ i^* i_* \cF$ in the following composition of maps
$$i^{-1} i_* \cF=i^{-1} i_* \cF \otimes_{i^{-1} \cO_X}i^{-1} \cO_X \to i^* i_* \cF \to \cF\otimes_{i^{-1} \cO_X} \cO_Z \to \cF$$
is also surjective.
Since the composition of maps is an isomorphism, $i^* i_* \cF\to \cF$ is also an isomorphism.

Let $\cF$ be a coherent sheaf on $X$ supported in $Z$.
Cover $X$ by finite open sets $U_\alpha$ such that $U_\alpha$ is contained in an open set $\Omega_\alpha$ of $\C^{N_\alpha}$ for some $N_\alpha$.
Let $i_\alpha$ be the inclusion of $U_\alpha$.
$i_{\alpha*} \cF|_{U_\alpha}$ is a coherent sheaf on $\Omega_\alpha$.
Since it is supported in $i_\alpha(Z)$ (with possible shrinking $U_\alpha$ a bit,) there exists $N'_\alpha$ large enough such that
$$\cI_{i_\alpha(Z)}^{N'_\alpha} i_{\alpha*} \cF|_{U_\alpha}=0$$
by Hilbert's Nullstellensatz.
Pull back by $i_\alpha$ gives
$$\cI_{Z}^{N'_\alpha} i^*_\alpha i_{\alpha*} \cF|_{U_\alpha}=0 \to \cI_{Z}^{N'_\alpha} \cF|_{U_\alpha}=0.$$
Since $X$ is compact, $N=\max_\alpha N'_\alpha < \infty$ which shows the inverse inclusion.

Define the Grothendieck group $G_Z(X)$ as $\Z[\coh_Z(X)]$ modulo the following equivalent relation.
$[\cS]+[\cQ]=[\cF] \in G_Z(X)$ if there exists an exact sequence 
$$0 \to \cS \to \cF \to \cQ \to 0$$
for the coherent sheaves $\cS, \cF, \cQ$ supported in $Z$.
Let $G(Z)$ be the Grothendieck group of coherent sheaves on $Z$ (supported in $Z$).
Then we have the following lemma.
\begin{mylem}
We have isomorphism
$$i_{Z*}: G(Z) \simeq G_Z(X).$$
\end{mylem}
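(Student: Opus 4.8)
The plan is to prove that $i_{Z*}$ is a well-defined group homomorphism, that it is surjective, and that it admits a left inverse; together these give the claimed isomorphism. First I would check well-definedness. Since $i_Z$ is a closed immersion with reduced structure sheaf $\cO_Z=\cO_X/\cI_Z$, the functor $i_{Z*}$ is exact and sends coherent $\cO_Z$-modules to coherent $\cO_X$-modules annihilated by $\cI_Z$; by the characterization $\coh_Z(X)=\{\cF\in\coh(X): \cI_Z^N\cF=0 \text{ for } N\gg 0\}$ established above (using compactness of $X$), such sheaves lie in $\coh_Z(X)$, and exactness ensures that $i_{Z*}$ descends to a homomorphism $G(Z)\to G_Z(X)$. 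Conversely, by the isomorphism $i^*i_*\cong\mathrm{id}$ on coherent sheaves on $Z$ recalled above, $i_{Z*}$ identifies $\coh(Z)$ with the full subcategory of $\coh_Z(X)$ of sheaves killed by $\cI_Z$; in particular every coherent $\cO_X$-module with $\cI_Z$ acting as $0$ is canonically $i_{Z*}$ of a coherent $\cO_Z$-module, a fact I will use freely.

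For surjectivity, take $\cF\in\coh_Z(X)$, choose $N$ with $\cI_Z^N\cF=0$, and use the finite filtration $\cF\supseteq \cI_Z\cF\supseteq\cdots\supseteq\cI_Z^N\cF=0$. Each quotient $\cI_Z^k\cF/\cI_Z^{k+1}\cF$ is killed by $\cI_Z$, hence equals $i_{Z*}\cG_k$ for some coherent $\cG_k$ on $Z$, so
\[ [\cF]=\sum_{k=0}^{N-1}[\cI_Z^k\cF/\cI_Z^{k+1}\cF]=i_{Z*}\Bigl(\sum_{k=0}^{N-1}[\cG_k]\Bigr)\quad\text{in }G_Z(X). \]
For injectivity I would construct a left inverse. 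Define $\psi(\cF):=\sum_{k\geq 0}[\cI_Z^k\cF/\cI_Z^{k+1}\cF]\in G(Z)$ for $\cF\in\coh_Z(X)$ (a finite sum, each term viewed as an $\cO_Z$-module). To see that $\psi$ descends to $G_Z(X)$, take a short exact sequence $0\to\cS\to\cF\to\cQ\to 0$ in $\coh_Z(X)$ and set $\cS_k:=\cS\cap\cI_Z^k\cF$; since the image of $\cI_Z^k\cF$ in $\cQ$ is $\cI_Z^k\cQ$, for each $k$ one obtains a short exact sequence of $\cO_Z$-modules
\[ 0\to \cS_k/\cS_{k+1}\to \cI_Z^k\cF/\cI_Z^{k+1}\cF\to \cI_Z^k\cQ/\cI_Z^{k+1}\cQ\to 0, \]
whence, summing over $k$, $\psi(\cF)=\bigl(\sum_k[\cS_k/\cS_{k+1}]\bigr)+\psi(\cQ)$. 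Now $\{\cS_k\}_k$ is an $\cI_Z$-stable filtration of $\cS$ by the Artin--Rees lemma applied at each (Noetherian) stalk $\cO_{X,x}$, a single exponent working on all of $X$ because $X$ is compact and the cokernels of $\cI_Z(\cS\cap\cI_Z^k\cF)\hookrightarrow \cS\cap\cI_Z^{k+1}\cF$ form a descending chain of closed analytic subsets. The standard comparison of two $\cI_Z$-stable filtrations (through their associated graded modules) then gives $\sum_k[\cS_k/\cS_{k+1}]=\sum_k[\cI_Z^k\cS/\cI_Z^{k+1}\cS]=\psi(\cS)$, so $\psi(\cF)=\psi(\cS)+\psi(\cQ)$ and $\psi$ is a well-defined homomorphism $G_Z(X)\to G(Z)$. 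Since $i_{Z*}\cG$ is killed by $\cI_Z$ for $\cG\in\coh(Z)$, we have $\psi\circ i_{Z*}=\mathrm{id}_{G(Z)}$; combined with surjectivity this proves $i_{Z*}$ is an isomorphism (equivalently $i_{Z*}\circ\psi=\mathrm{id}_{G_Z(X)}$ by the filtration above).

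I expect the main obstacle to be the well-definedness of $\psi$, i.e. its additivity on short exact sequences: concretely the identity $\sum_k[\cS\cap\cI_Z^k\cF/\cS\cap\cI_Z^{k+1}\cF]=\sum_k[\cI_Z^k\cS/\cI_Z^{k+1}\cS]$ in $G(Z)$, which requires the Artin--Rees lemma together with the comparison lemma for $\cI_Z$-stable filtrations, plus the (minor but necessary) check that a single Artin--Rees exponent works globally on the compact space $X$. Everything else is a routine consequence of exactness of $i_{Z*}$ and of the $\cI_Z$-adic filtration.
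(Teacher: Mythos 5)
Your argument is correct, and it reaches the lemma by a route that is the same in spirit but different in execution from the paper's: the paper observes that $F^i\cF=\cI_Z^i\cF$ is a finite filtration with graded pieces in $\coh(Z)$ and then cites Quillen's dévissage theorem as a black box, whereas you unwind that citation into a self-contained proof at the level of Grothendieck groups — surjectivity from the telescoping filtration, injectivity from an explicit inverse $\psi$ built from the associated graded pieces. This is precisely the classical elementary proof of dévissage for $G_0$, so nothing is lost, and you gain independence from the Q-construction machinery. One comment on the step you single out as the main obstacle: you do not actually need Artin--Rees, and you are right to be uneasy about producing a uniform Artin--Rees exponent on a complex space — your compactness sketch does not really settle that. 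The cleaner observation is that $\sum_k[F_k/F_{k+1}]\in G(Z)$ is the same for \emph{every} finite filtration $F_\bullet$ of $\cF$ satisfying $\cI_Z F_k\subseteq F_{k+1}$: inserting an intermediate term $F_k\supseteq F'\supseteq F_{k+1}$ keeps both new graded pieces killed by $\cI_Z$ and does not change the sum (the short exact sequence of $\cO_Z$-modules telescopes in $G(Z)$), and any two such filtrations admit a common refinement of the same type via the Zassenhaus terms $F_{k+1}+(F_k\cap F'_j)$, whose graded pieces match up by the butterfly lemma. Applied to the two filtrations $\cS\cap\cI_Z^k\cF$ and $\cI_Z^k\cS$ of $\cS$ — both finite with $\cI_Z$-stable steps — this yields the identity $\sum_k[\cS_k/\cS_{k+1}]=\psi(\cS)$ with no Noetherian approximation at all; alternatively, additivity of $\psi$ follows at once by concatenating the pullback to $\cF$ of a filtration of $\cQ$ with a filtration of $\cS$. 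With that replacement your proof is complete and entirely elementary.
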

\begin{proof}
Notice that since $i_{Z}$ is a closed immersion such that $i_{Z*}$ is exact, the morphism $i_{Z*}$ is well defined between Grothendieck groups.
If $\cF \in \coh_Z(X)$, $\cF$ can be given a finite filtration by the formula $F^i\cF :=\cI^i_Z \cF$.
For any $i$, the associated graded piece $F^i \cF /F^{i+1} \cF$ is in $\coh(Z)$.
Thus we can apply the dévissage theorem to the K-theory group (cf. Theorem 4, Section 5 \cite{Qui}).
\end{proof}
Let $Z_i$ be the irreducible components of $Z$.
We have the following lemma.
\begin{mylem}
The natural morphism
$$\oplus i_{Z_i*}:\oplus_i G_{Z_i}(X) \simeq \oplus_i G(Z_i) \to G_Z(X)$$
is surjective.
\end{mylem}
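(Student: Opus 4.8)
The plan is to argue by induction on the number $k$ of irreducible components of $Z$, peeling off one component at a time. The base case $k=1$ is immediate: there $Z=Z_1$ and the map under consideration is $i_{Z_1*}=i_{Z*}:G(Z)\to G_Z(X)$, which the previous lemma identifies as an isomorphism. For the inductive step I would fix an irreducible component, say $Z_1$, and write $Z=Z_1\cup Z'$ with $Z'\coloneqq Z_2\cup\cdots\cup Z_k$, whose irreducible components are exactly $Z_2,\dots,Z_k$. Since $i_{Z*}:G(Z)\to G_Z(X)$ is surjective by the previous lemma, every class in $G_Z(X)$ is a $\Z$-linear combination of classes $i_{Z*}[\cG]$ with $\cG$ a coherent sheaf on $Z$, so it suffices to show that each such $i_{Z*}[\cG]$ lies in the image of $\bigoplus_i i_{Z_i*}$.

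The core of the argument is the short exact sequence of coherent sheaves on $Z$
$$0 \to \cI\cG \to \cG \to \cG/\cI\cG \to 0,$$
where $\cI\subset\cO_Z$ is the ideal sheaf of the reduced closed analytic subset $Z'\subset Z$; this gives $i_{Z*}[\cG]=i_{Z*}[\cI\cG]+i_{Z*}[\cG/\cI\cG]$. For the quotient term I would note that $\cG/\cI\cG$ is annihilated by $\cI$ and, since $Z$ is reduced so that $\cO_Z/\cI=\cO_{Z'}$, is genuinely a coherent sheaf on $Z'$; the induction hypothesis applied to $Z'\subset X$ then writes its class as a sum of push-forwards of classes from $Z_2,\dots,Z_k$, and since the composite $Z_i\hookrightarrow Z'\hookrightarrow X$ is $Z_i\hookrightarrow X$, this contributes precisely to the image of $\bigoplus_{i\geq 2}i_{Z_i*}$. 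For the sub-sheaf term, the key geometric observation is that on the open set $Z\setminus Z_1$ the subsets $Z'$ and $Z$ coincide as reduced spaces, hence $\cI|_{Z\setminus Z_1}=0$ and $\cI\cG$ is supported in $Z_1$, i.e.\ $\cI\cG\in\coh_{Z_1}(Z)$; applying the previous lemma to the closed immersion $Z_1\hookrightarrow Z$ (equivalently, the dévissage along the finite filtration $\cI_{Z_1}^{\bullet}\cdot(\cI\cG)$, whose subquotients are coherent sheaves on $Z_1$) puts $[\cI\cG]$, hence $i_{Z*}[\cI\cG]$, in the image of $i_{Z_1*}$. Adding the two contributions completes the induction.

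I expect the only genuinely delicate point to be the bookkeeping with reduced structures and with the various push-forward maps. Concretely, one must check that $\cO_Z/\cI=\cO_{Z'}$ (so that $\cG/\cI\cG$ is a sheaf \emph{on} $Z'$, not merely one supported there), that the ideal sheaf of $Z'$ in $Z$ restricts to $0$ on the open complement of $Z_1$, and that the maps $G(Z_i)\to G(Z')\to G(Z)\to G_Z(X)$ (and $G(Z_1)\to G_{Z_1}(Z)\to G(Z)\to G_Z(X)$) compose to $i_{Z_i*}$. None of these is difficult, but they are exactly the places where the hypotheses — $Z$ reduced, and $Z_1,\dots,Z_k$ being its actual irreducible components — enter; I would spell them out carefully while keeping the homological skeleton above as the backbone of the write-up.
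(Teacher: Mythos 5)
Your argument is correct and follows essentially the same route as the paper: induction on the number of irreducible components, the reduction via the previous (d\'evissage) lemma from $G_Z(X)$ to $G(Z)$, and a short exact sequence splitting a class into a piece that is a genuine sheaf on one part of $Z$ and a piece merely supported in the complementary part. The only (immaterial) difference is that you quotient by the ideal sheaf of $Z'=Z_2\cup\cdots\cup Z_k$, so the quotient lives on $Z'$ and the kernel is supported in the single component $Z_1$, whereas the paper uses $\cF\to i_{Z_N*}i_{Z_N}^*\cF$, so the quotient lives on the single component $Z_N$ and the kernel is supported in the union of the others; both variants are disposed of identically by d\'evissage plus the inductive hypothesis.
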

\begin{proof}
By Lemma 1, it is enough to show that the natural morphism $\oplus_i G(Z_i) \to G(Z)$ is surjective.
We proceed by induction on the number $N$ of the components of $Z$.
Let $Z':=\cup_{i <N} Z_i$.
It is enough to show that $G_{Z'}(Z) \oplus G(Z_N) \to G(Z)$ is surjective.
Let $\cF \in G(Z)$.
Let $\cG$ be the kernel of the natural morphism
$$0 \to \cG \to \cF \to i_{Z_N*}i^*_{Z_N} \cF \to 0.$$
In fact, 
since $Z_N$ is closed, it is easy to see for $z \in Z_N$, $\cF_z \to (i_{Z_N*}i^{-1}_{Z_N} \cF)_z$ is surjective.
On the other hand, 
since $i^{-1}_{Z_N} \cO_X \to \cO_{Z_N}$ is surjective and $i_{Z_N*}$ is right exact,
$$i_{Z_N*}i^{-1}_{Z_N} \cF \to i_{Z_N*}i^{*}_{Z_N} \cF =i_{Z_N*}(i^{-1}_{Z_N} \cF \otimes_{i^{-1}_{Z_N} \cO_X} \cO_{Z_N}) $$
is surjective.
Thus for $z \in Z_N$, $\cF_z \to (i_{Z_N*}i^{*}_{Z_N} \cF)_z$ is surjective.
The surjection outside $Z_N$ is trivial since it is a 0 map.
Over $Z_N \setminus \cup_{i \leq N-1} Z_i$, $i_{Z_N}$ is locally isomorphic, thus $\cF \to i_{Z_N*}i^*_{Z_N} \cF$ is isomorphic.
Thus $\cG$ is supported in $Z'$ and $G_{Z'}(Z) \oplus G(Z_N) \to G(Z)$ is surjective.
(More precisely, by Lemma 65.14.1 \cite{St}, using the adjunction theory, $i_{Z_N*}i^{-1}_{Z_N} \cF=\cF \otimes_{\cO_X} i_{Z_N*} \cO_{Z_N}$. However, we do not need this fact here.)
\end{proof}
\begin{mydef}
For any coherent sheaf $\cF$ on a compact complex space $X$, we define the homology first Chern class of $\cF$ as
$$c_1^h(\cF):=\tau_{2n-2}(\cF)-\tau_{2n-2}(\cO_X).$$
\end{mydef}
In particular, with the natural morphism from de Rham cohomology to singular cohomology, the natural pair between singular cohomology and singular homology 
$$\langle c_1^h(\cF), \omega^{n-1} \rangle$$
defines the slope of $\cF$.
Now, using the Riemann-Roch-Grothendieck formula, we give an equivalent definition of Chern classes by using Hironaka's resolution of singularities.
\begin{mylem}
Let $\cF$ be a coherent sheaf over a compact complex space $X$.
Then for $r$ strictly larger than the dimension of the support of $\cF$,
$$\tau_r(\cF)=0.$$
\end{mylem}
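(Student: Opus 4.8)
The plan is to push the entire computation onto the reduced support $Z:=\mathrm{supp}(\cF)$, where homology vanishes above the relevant degree, and then transport the answer back to $X$ through the naturality of the Riemann--Roch transformation $\tau$. Write $i_Z\colon Z\hookrightarrow X$ for the closed immersion. Since $X$ is compact, $Z$ is a compact complex space; I let $d$ denote its dimension as a topological space, i.e.\ $d=2\dim_\C Z=2\dim_\C\mathrm{supp}(\cF)$, which is the meaning of ``the dimension of the support'' in the statement, and I shall prove $\tau_r(\cF)=0$ for every $r>d$.

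First I would lift $[\cF]$ to $Z$: by definition $\cF\in\coh_Z(X)$, so $[\cF]\in G_Z(X)$, and Lemma~1 gives an isomorphism $i_{Z*}\colon G(Z)\xrightarrow{\sim}G_Z(X)$; hence there is $\beta\in K_0(Z)$ with $i_{Z*}\beta=[\cF]$ in $K_0(X)$. (One may take $\beta=\sum_{i\ge0}\bigl[\cI_Z^{\,i}\cF/\cI_Z^{\,i+1}\cF\bigr]$, a finite sum because $\cI_Z^{\,N}\cF=0$ for $N\gg0$ by the characterisation of $\coh_Z(X)$ recalled above, each graded piece being an $\cO_Z$-module.) Next I would apply functoriality: $i_Z$ is a closed immersion, hence proper, with $(i_Z)_!=(i_Z)_*$ on $K$-theory since $R^q(i_Z)_*=0$ for $q>0$, so the naturality square for $\tau$ yields $\tau(\cF)=\tau(i_{Z*}\beta)=(i_Z)_*\bigl(\tau(\beta)\bigr)$ in $H_{2*}(X,\R)$, where $(i_Z)_*\colon H_{2*}(Z,\R)\to H_{2*}(X,\R)$ is the degree-preserving homology pushforward.

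It then remains to note that $\tau(\beta)$ has no component in homological degree $>d$. This is where the topology of Section~2 enters: by Corollary~1 together with the triangulation theorem (Theorem~3), the compact complex space $Z$ is homeomorphic to a finite simplicial complex of dimension at most $d$, so $H_r(Z,\R)=0$ for all $r>d$. Since $(i_Z)_*$ preserves degree, $\tau_r(\cF)=(i_Z)_*\bigl(\tau_r(\beta)\bigr)=0$ for $r>d$ (the odd-degree components vanish in any case, as $\tau$ takes values in $H_{2*}$), which is the claim. The only step requiring any care is this last topological input; everything else is formal, so the lemma is essentially a bookkeeping consequence of the dévissage isomorphism of Lemma~1 and the naturality of $\tau$.
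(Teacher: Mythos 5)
Your proof is correct and follows essentially the same route as the paper's: lift $[\cF]$ to the support via the dévissage isomorphism, push forward $\tau$ along the closed immersion by naturality, and conclude from the vanishing of $H_r(Z,\R)$ for $r$ above the (real) dimension of the triangulated support. Your version is slightly more explicit about the lift (the associated graded of the $\cI_Z$-adic filtration) and about reading ``dimension of the support'' as $2\dim_\C$, both of which match the paper's intent.
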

\begin{proof}
Let $A$ be the support of $\cF$ and $i: A \to X$ be the corresponding closed embedding.
Then by Lemma 2, we can take $\cF=i_* \cF'$ and reduce the proof to the case of a coherent sheaf on $A$.
We have $i_! \cF'=i_* \cF'=\cF$, which implies by the Riemann-Roch-Grothendieck formula that $\tau(\cF)=i_* \tau(\cF')$.
Notice that by a triangulation of $A$, we have $H_i(A, \R)=0$ for $i$ strictly larger than the dimension of $A$.
This finishes the proof.
\end{proof}
\begin{mylem}
Let $X$ be a reduced, compact, irreducible, complex space that is smooth in codimension 1. Let $\cF$ be a torsion-free coherent sheaf over $X$.
Let $\pi: \tilde{X} \to X$ be a resolution of singularity such that $\tilde{X}$ is smooth and $\pi^* \cF/ \tors$ is a vector bundle where $\tors$ means the torsion part.
Without loss of generality, we can assume that $\pi$ is a biholomorphism on an open set of $X$ outside a closed analytic set of codimension at least 2.
Then we have 
$$c_1^h(\cF)=\pi_*(c_1(\pi^* \cF/ \tors) \cap [\tilde{X}])$$
If $X$ is smooth, we also have that
$$c_1^h(\cF)=c_1(\det(\cF)) \cap [X].$$
\end{mylem}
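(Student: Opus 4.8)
The plan is to transport the identity to the smooth model $\tilde X$ via the naturality of the Riemann--Roch transformation $\tau$ under the proper modification $\pi$, and to discard all correction terms with Lemma~5. Set $\cG:=\pi^*\cF/\tors$, a vector bundle of rank $r$ on $\tilde X$, and let $S\subset X$ be the closed analytic set of codimension $\geq 2$ outside of which $\pi$ is a biholomorphism. The first thing I would prove is that, in $K_0(X)$,
\[
\pi_!\cG=\sum_i(-1)^i[R^i\pi_*\cG]=[\cF]+\gamma ,
\]
where $\gamma$ is a $\Z$-linear combination of classes of coherent sheaves supported on $S$. Indeed, the higher direct images $R^i\pi_*\cG$ for $i\geq 1$ are coherent (Grauert) and vanish over $X\setminus S$, where $\pi$ is an isomorphism; and the adjunction morphism $\cF\to R^0\pi_*\cG=\pi_*\cG$ is injective with cokernel supported on $S$, because over $X\setminus S$ the torsion-freeness of $\cF$ makes it an isomorphism.

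I expect this first step to be the main obstacle, or at any rate the place where the hypotheses are genuinely used: torsion-freeness of $\cF$, together with the normalisation that $\pi$ be a biholomorphism outside a set of codimension $\geq 2$, is exactly what confines $\gamma$ to sheaves supported on $S$; if $\cF$ had torsion, the correction could be supported along a divisor and the degree count below would fail. Granting this, I would apply $\tau$ and the naturality square for $\pi$ to get $\pi_*\tau(\cG)=\tau(\pi_!\cG)=\tau(\cF)+\tau(\gamma)$, and compare homogeneous components of homological degree $2n-2$, using that $\pi_*$ preserves degree: $\pi_*\tau_{2n-2}(\cG)=\tau_{2n-2}(\cF)+\tau_{2n-2}(\gamma)$. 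By additivity of $\tau$ and Lemma~5, $\tau_{2n-2}$ of a coherent sheaf supported on $S$ vanishes since $\dim S\leq n-2$, so $\tau_{2n-2}(\gamma)=0$ and $\tau_{2n-2}(\cF)=\pi_*\tau_{2n-2}(\cG)$. The same argument with $\cF$ replaced by $\cO_X$ (so $\cG$ becomes $\pi^*\cO_X/\tors=\cO_{\tilde X}$ and $S$ is unchanged) gives $\tau_{2n-2}(\cO_X)=\pi_*\tau_{2n-2}(\cO_{\tilde X})$. Subtracting, and using on the smooth space $\tilde X$ the identity $c_1(\cG)\cap[\tilde X]=\tau_{2n-2}(\cG)-\tau_{2n-2}(\cO_{\tilde X})$ recalled above, I obtain
\[
c_1^h(\cF)=\tau_{2n-2}(\cF)-\tau_{2n-2}(\cO_X)=\pi_*\bigl(\tau_{2n-2}(\cG)-\tau_{2n-2}(\cO_{\tilde X})\bigr)=\pi_*\bigl(c_1(\pi^*\cF/\tors)\cap[\tilde X]\bigr),
\]
which is the first assertion.

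For the second assertion, suppose $X$ is smooth. Applying the first assertion to the given $\pi$, it remains to identify $\pi_*\bigl(c_1(\pi^*\cF/\tors)\cap[\tilde X]\bigr)$ with $c_1(\det\cF)\cap[X]$. From the exact sequence $0\to\tors(\pi^*\cF)\to\pi^*\cF\to\pi^*\cF/\tors\to 0$ and the multiplicativity of the determinant over short exact sequences, $c_1(\pi^*\cF/\tors)$ differs from $\pi^*c_1(\det\cF)$ by a divisor class $D$ supported on the exceptional locus of $\pi$. Then the projection formula gives $\pi_*\bigl(\pi^*c_1(\det\cF)\cap[\tilde X]\bigr)=c_1(\det\cF)\cap\pi_*[\tilde X]=c_1(\det\cF)\cap[X]$ (since $\pi_*[\tilde X]=[X]$), while $\pi_*\bigl(D\cap[\tilde X]\bigr)$ lies in $H_{2n-2}$ of the image of the exceptional locus, a set of complex dimension $\leq n-2$, hence vanishes. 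This completes the proof.
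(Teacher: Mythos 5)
Your proof is correct and follows essentially the same route as the paper: for the first identity, naturality of $\tau$ under $\pi$ together with the vanishing of $\tau_{2n-2}$ on sheaves supported in codimension $\geq 2$ (this is Lemma 3 of the paper, not your ``Lemma 5''), and for the second, the fact that $c_1(\pi^* \cF/ \tors)-\pi^*c_1(\det \cF)$ is a combination of exceptional divisor classes whose images in $X$ have complex dimension $\leq n-2$. The only cosmetic difference is that you kill the exceptional contribution by pushing forward in homology, whereas the paper dually pairs against a cohomology class $\alpha$ of $X$ and observes that $\alpha$ restricts to zero on $\pi(E_i)$; the two arguments are equivalent.
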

\begin{proof}
By the assumption that $\pi^*(\cF)/ \tors$ is a vector bundle, the Riemann-Roch-Grothendieck theorem implies that
$$\tau(\pi^*(\cF)/ \tors)=\ch(\pi^*(\cF)/ \tors)) \cup \Todd(\tilde{X}) \cap [\tilde{X}].$$
% since for any $i>0$, the support of $R^i \pi_* \cO_{\tilde{X}}$ is of (complex) codimension at least 3, 
Observe that $\pi_* (\pi^*(\cF)/ \tors)$, $\cF$ coincide outside an analytic set of (complex) codimension at least~$2$.
By the codimension assumption, for any $i>0$, the support of $R^i \pi_* (\pi^*(\cF)/ \tors)$ is of (complex) codimension at least 2.
This implies
$$\tau_{2n-2}(\cF)=\pi_* \tau_{2n-2}(\pi^*(\cF)/ \tors)$$
by Lemma 3. 
Thus we have $$\tau_{2n-2}(\cF)-\tau_{2n-2}(\cO_X)=\pi_* (\tau_{2n-2}(\pi^*(\cF)/ \tors)-\tau_{2n-2}(\cO_{\tilde{X}}))=\pi_*(c_1(\pi^*(\cF)/ \tors) \cap [\tilde{X}]).$$

Now we begin to prove the second equality. 
%with the additional assumption that $X$ is normal.
By duality between $H^2(X,\R)$ and $H_2(X, \R)$, the second equality is equivalent to the fact that for any $\alpha \in H^2(X, \R)$
$$\langle \pi_*(c_1(\pi^* \cF/ \tors) \cap [\tilde{X}]), \alpha \rangle=\langle c_1(\det(\cF)) \cap [X], \alpha \rangle.$$
The left hand side term is equal to 
$$\langle (c_1(\pi^* \cF/ \tors) \cap [\tilde{X}]), \pi^* \alpha \rangle=\langle (c_1(\pi^* \cF/ \tors) \cup \pi^* \alpha), [\tilde{X}]  \rangle.$$
The difference $c_1(\pi^* \cF/ \tors)-\pi^*(c_1( \det \cF))$ is a linear combination of the cohomology classes $\{E_i\}$ associated to the irreducible components of the exceptional divisor where $i_{E_i,*}$ is the closed embedding of $E_i$.
By Poincaré duality (by $\tilde{X}$ compact smooth), $\langle \{E_i\} \cup \pi^* \alpha, [\tilde{X}] \rangle=\langle i_{E_i}^*  \pi^* \alpha, [E_i]\rangle$.
On the other hand, in singular cohomology,$$i_{E_i}^*  \pi^* \alpha=\pi_{|_{E_i}}^* i_{\pi(E_i)}^* \alpha=0$$
since $i_{\pi(E_i)}^* \alpha=0$ by the dimension condition of $\pi(E_i)$.
Hence we have 
$$\langle \pi_*(c_1(\pi^* \cF/ \tors) \cap [\tilde{X}]), \alpha \rangle=\langle \pi^*(c_1( \det \cF) \cup  \alpha), [\tilde{X}]  \rangle=\langle (c_1( \det \cF) \cup  \alpha), \pi_*[\tilde{X}]  \rangle$$
which finishes the proof since $\pi_*[\tilde{X}]=[X]$.
\end{proof}
\begin{myrem}
{\em 
If $\cF$ admits a resolution of vector bundles (e.g. if $X$ is a projective manifold), we have for a resolution $E^\bullet$ that
$$\tau(\cF)=\sum_i \tau(E_i)=\sum_i (-1)^i \ch(E_i) \tau(\cO_X).$$
In this case, one way to define the Chern character class in cohomology is to define
$$\ch(\cF)= \sum_i (-1)^i \ch(E_i).$$
We have
$$\tau_{2n-2}(\cF)-\tau_{2n-2}(\cO_X)=(\sum_i (-1)^i c_1(E_i) ) \cap [X]=c_1(\det(\cF)) \cap [X].$$
In other words, $c_1^h(\cF)$ is the virtual ``Poincaré dual" of $c_1(\cF)$.
}
\end{myrem}
More generally, if $X$ is smooth in codimension $k$ and $\cF$ is locally free in codimension $k$, one can define the homology Chern polynomial of degree $\leq k$ as
$$P(c_\bullet(\cF))^h:=\pi_*(P(c_\bullet (\pi^* \cF/ \tors)) \cap [\tilde{X}])$$
for some modification $\pi: \tilde{X} \to X$ such that $\tilde{X}$ is smooth and $\pi^* \cF/ \tors$ is locally free.
Without loss of generality, we can assume that $\pi$ is a biholomorphism over an open set of $X$ outside closed analytic set of codimension at least $k+1$.
By filtering property of modifications, this is independent of the choice of such modification $\pi$.
If $\cF$ is locally free, 
$$P(c_\bullet(\cF))^h=P(c_\bullet ( \cF)) \cap [X].$$
\begin{mylem}
For any exact sequence 
$$0 \to \cS \to \cF \to \cQ \to 0$$
where $X$ is smooth in codimension $k$ and $\cS,\cF, \cQ$ are locally free in codimension $k$,
we have
$$\ch_{\leq k}^h(\cF)=\ch_{\leq k}^h(\cS)+\ch_{\leq k}^h(\cQ).$$
\end{mylem}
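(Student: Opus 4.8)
The plan is to reduce the identity to the smooth model by choosing a single resolution that trivialises all three sheaves at once. Concretely, I would pick a modification $\pi : \tilde{X} \to X$ with $\tilde{X}$ smooth such that $\cS' := \pi^*\cS/\tors$, $\cF' := \pi^*\cF/\tors$ and $\cQ' := \pi^*\cQ/\tors$ are all locally free on $\tilde{X}$, and such that $\pi$ is a biholomorphism over the complement of a closed analytic subset $Z \subset X$ of codimension $\geq k+1$. Such a $\pi$ exists: start from a resolution of $X$ principalising $\cS$, then blow up further to principalise the strict transform mod torsion of $\cF$, and then of $\cQ$, using that the pullback of a locally free sheaf stays locally free. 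Since $\cS,\cF,\cQ$ are already locally free, and $X$ smooth, away from a set of codimension $\geq k+1$, all centres of these blow-ups may be taken inside that set, so the composite $\pi$ is an isomorphism over its complement. By the independence of $\ch_{\leq k}^h$ of the chosen modification (the filtering property of modifications already invoked above), it then suffices to prove
$$\pi_*\bigl((\ch_{\leq k}(\cF') - \ch_{\leq k}(\cS') - \ch_{\leq k}(\cQ'))\cap[\tilde{X}]\bigr)=0.$$

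Next I would compare the pulled-back sheaves over the locus where $\pi$ is an isomorphism. Write $U := \tilde{X}\setminus\pi^{-1}(Z)$. After enlarging $Z$ we may assume $X\setminus Z$ is smooth and $\cS,\cF,\cQ$ are locally free there, so that over $X\setminus Z$ the given sequence is an exact sequence of vector bundles; pulling back along the biholomorphism $\pi|_U$ and using that passing to the torsion-free quotient commutes with restriction to open sets, we obtain an exact sequence $0\to\cS'|_U\to\cF'|_U\to\cQ'|_U\to 0$ of vector bundles. By naturality and additivity of the Chern character on exact sequences of vector bundles, the class $\delta := \ch(\cF')-\ch(\cS')-\ch(\cQ')\in H^{2*}(\tilde{X},\R)$ restricts to $0$ on $U$. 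Arguing as in the proof of Lemma 4 (Poincaré duality on the compact complex manifold $\tilde{X}$, equivalently the long exact sequence in Borel--Moore homology), it follows that $\delta\cap[\tilde{X}]$ is the image of a homology class carried by $\pi^{-1}(Z)$.

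Finally I would push forward and count degrees. Since $\pi(\pi^{-1}(Z))\subseteq Z$, functoriality of push-forward gives that $\pi_*(\delta\cap[\tilde{X}])$ lies in the image of $H_*(Z,\R)\to H_*(X,\R)$. But $Z$ has complex dimension $\leq n-k-1$, hence real dimension $\leq 2n-2k-2$, so $H_m(Z,\R)=0$ for $m\geq 2n-2k-1$ (by a triangulation of $Z$, exactly as in Lemma 3). On the other hand $\ch_{\leq k}(\cF')\cap[\tilde{X}]$ lives in homology degrees $2n,2n-2,\dots,2n-2k$, all $\geq 2n-2k$, and similarly for $\cS'$ and $\cQ'$. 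Therefore the degree $\leq k$ part of $\pi_*(\delta\cap[\tilde{X}])$ vanishes, which is precisely the asserted identity $\ch_{\leq k}^h(\cF)=\ch_{\leq k}^h(\cS)+\ch_{\leq k}^h(\cQ)$.

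The step I expect to be the main obstacle is the first one: producing one resolution that trivialises all three sheaves simultaneously while remaining an isomorphism outside a set of codimension $\geq k+1$, together with the bookkeeping needed to guarantee that the pulled-back sequence really is exact with locally free terms over $U$ (this is where the compatibility of $(-)/\tors$ with restriction to open sets, and the enlargement of $Z$ to contain the singular and non-locally-free loci, enter). Once this is in place the remainder is the same Poincaré-duality-plus-dimension-count mechanism already used for $c_1^h$ in Lemma 4.
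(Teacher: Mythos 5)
Your argument is correct, but it establishes the key vanishing by a genuinely different mechanism than the paper. The paper works in the Grothendieck group of $\tilde{X}$: it writes down the long exact sequence of derived pullbacks $L^i\pi^*$ and observes that for $i>0$ the sheaves $L^i\pi^*\cF$, as well as the torsion parts of $\pi^*\cF$, are torsion sheaves supported on the exceptional divisor $E$; the defect $[\pi^*\cF/\tors]-[\pi^*\cS/\tors]-[\pi^*\cQ/\tors]$ is therefore a combination of such sheaves, and the paper then invokes the construction of Chern classes of coherent sheaves on a compact complex manifold (Grivaux, Wu) to write $c_\bullet=i_{E*}(\gamma_\bullet)$ and kills the pairing against a test class $\alpha$ via $i_E^*\pi^*\alpha=0$, after first representing $\alpha$ by a de Rham class (using the surjectivity of de Rham onto singular cohomology). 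You instead stay entirely in singular (co)homology: the defect class $\delta=\ch(\cF')-\ch(\cS')-\ch(\cQ')$ restricts to zero on $U=\tilde{X}\setminus\pi^{-1}(Z)$ by the Whitney formula applied to the honest exact sequence of vector bundles there, hence by Alexander--Lefschetz duality $\delta\cap[\tilde{X}]$ is carried by $\pi^{-1}(Z)$, and its pushforward factors through $H_*(Z,\R)$, which vanishes in degrees $\geq 2n-2k$ since $\dim_{\R}Z\leq 2n-2k-2$. Your route avoids the derived-pullback bookkeeping, the nontrivial input that coherent sheaves on a possibly non-projective compact complex manifold have additive Chern classes concentrated on their support, and the de Rham representability step; in exchange you need a single modification resolving all three sheaves simultaneously and the independence of $\ch^h_{\leq k}$ of the chosen modification, both of which the paper already grants itself when setting up the definition (the ``filtering property of modifications''). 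Both proofs terminate with the same dimension count on the codimension-$(k+1)$ set $Z$, so the two arguments are parallel in outcome but differ in where the support restriction is produced: in $K$-theory for the paper, in cohomology with supports for you.
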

\begin{proof}
We have a long exact sequence 
$$\cdots \to L^i\pi^* \cS \to L^i \pi^* \cF \to L^i \pi^*  \cQ \to L^{i-1} \pi^* \cS \to \cdots .$$
Notice that for $i>0$, $L^i \pi^* \cF $ is a torsion sheaf supported in the exceptional divisor as well as the torsion part of $\pi^* \cF$.

To prove the lemma, it is enough to show that
for any torsion sheaf $\cF$ supported in the exceptional divisor $E$ on $\tilde{X}$ and any polynomial $P$ of degree $\leq k$ in terms of $c_\bullet(\cF)$ and $\beta$ any cohomology class on $\tilde{X}$, 
$$\pi_*(P(c_\bullet(\cF), \beta) \cap [\tilde{X}])=0.$$
By the duality between singular cohomology and singular homology,
it is enough to show that for any singular cohomology class $\alpha$ on $X$
 $$\langle \pi_*(P(c_\bullet(\cF), \beta) \cap [\tilde{X}]), \alpha \rangle=\langle P(c_\bullet(\cF), \beta) \cap [\tilde{X}], \pi^* \alpha \rangle =0.$$
Apply Proposition 1 to $\pi$.
Since $\tilde{X}$ is smooth, the bottom arrow in Proposition 1 is in fact an isomorphism.
Notice also that the de Rham cohomology of $X$ maps surjectively onto the singular cohomology of $X$.
Thus without loss of generality, we can assume that $\alpha$ can be defined in de Rham cohomology.
By the construction of Chern class over smooth manifolds (cf. \cite{Gri}, \cite{Wu19}), 
$c_\bullet(\cF)=i_{E*}(\gamma_\bullet)$ for some cohomology classes $\gamma_\bullet$.
Thus we have
$$\langle P(c_\bullet(\cF), \beta) \cap [\tilde{X}], \pi^* \alpha \rangle=\int_{\tilde{X}} P(c_\bullet(\cF), \beta) \wedge \pi^* \alpha=\int_E P(\gamma_\bullet, i_E^* \beta) \wedge i_E^* \pi^* \alpha=0,$$
since $i_E^* \pi^* \alpha=0$ by a dimension argument.
\end{proof}
In particular, if $\cF$ is a reflexive sheaf over a compact irreducible smooth in codimension 2 space and if $\cF$ admits a resolution of vector bundles, we have for a resolution $E_\bullet$ that
$$\ch_{\leq 2}^h(\cF)=\sum_i (-1)^i \ch_{\leq 2}^h(E_i)=\sum_i (-1)^i \ch_{\leq 2}(E_i)\cap [X].$$
\begin{myrem}
{\em
One can also define Chern polynomials in cohomology.
If $X$ is smooth in codimension~$k$ and $\cF$ is locally free in codimension $k$, one can define the Chern polynomial of degree $\leq k$ as
$$P(c_\bullet(\cF)):=\pi_*(P(c_\bullet (\pi^* \cF/ \tors))) \in \oplus_{i \leq k} H^{2i}_{\cD'-dR}(X, \R)$$
for some modification $\pi: \tilde{X} \to X$ such that $\tilde{X}$ is smooth and $\pi^* \cF/ \tors$ is locally free.
Without loss of generality, we can assume that $\pi$ is a biholomorphism over an open set of $X$ outside a closed analytic set of codimension at least $k+1$.
We have the following commutative diagram:
\[
\begin{tikzcd}
H_{2n-*}(\tilde{X}, \R)\arrow{d}{} \arrow{r}{}  & H^{2n-*}(\tilde{X}, \R)^*  \arrow{r}{} \arrow{d}{}  & H_{\cD-dR}^{2n-*}(\tilde{X}, \R)^* \arrow{r}{} \arrow{d}{} & H^{*}_{\cD'-dR}(\tilde{X}, \R) \arrow{d}{}  \\%
H_{2n-*}(X, \R) \arrow{r}{}  & H^{2n-*}(X, \R)^* \arrow{r}{}  & H^{2n-*}_{\cD-dR}(X, \R)^* \arrow{r}{}  & H^{*}_{\cD'-dR}(X,  \R). 
\end{tikzcd}
\]
The first horizontal morphism is given by duality between singular cohomology and homology.
The last horizontal morphism is induced by the natural pair between currents and forms.
Since $\tilde{X}$ is smooth, all arrows on the first line are isomorphisms. Then
$P(c_\bullet(\cF))$ is exactly the image of $P(c_\bullet(\cF))^h$ in $\oplus_{i \leq k} H_{2n-2i}(X, \R)$.
Since the morphisms on the bottom arrows are not necessarily isomorphisms if $X$ is singular, in some sense, the homology version of a Chern polynomial is more precise than the corresponding definition in cohomology.

The dimension condition in the definition of a homology Chern polynomial is also reflected in the following diagram.
Let $P(c_\bullet(\cF))$ be a Chern polynomial of degree $i \leq k$.
Let $Z$ be a closed analytic set of codimension at least $k+1$ such that $X\setminus Z$ is smooth and $\cF|_{X\setminus Z}$ is locally free.
Assume that $\pi$ is chosen such that it is a biholomorphism over $X\setminus Z$ with exceptional divisor $E \subset \tilde{X}$. 
On the other hand, we have the following exact sequence in singular cohomology
$$H^{2n-2i-1}(Z, \R) \to H^{2n-2i}_c(X \setminus Z, \R) \to H^{2n-2i}(X, \R) \to H^{2n-2i}(Z, \R)$$
for which $H^{2n-2i-1}(Z, \R)=H^{2n-2i}(Z, \R)=0$ by the codimension condition.
We have also that $H^{2n-2i}_c(X \setminus Z, \R)^* \simeq H^{2i}(X \setminus Z, \R)$ as a consequence of the Poincaré duality.
We have the following commutative diagram
\[
\begin{tikzcd}
H_{2n-2i}(\tilde{X}, \R)\arrow{d}{} \arrow{r}{\simeq}  & H^{2n-2i}(\tilde{X}, \R)^*  \arrow{r}{} \arrow{d}{}  & H_{c}^{2n-2i}(\tilde{X} \setminus E, \R)^* \arrow{r}{\simeq} \arrow{d}{\simeq} & H^{2i}_{}(\tilde{X} \setminus E, \R) \arrow{d}{\simeq}  \\%
H_{2n-2i}(X, \R) \arrow{r}{\simeq}  & H^{2n-2i}(X, \R)^* \arrow{r}{\simeq}  & H^{2n-2i}_{c}(X \setminus Z, \R)^* \arrow{r}{\simeq}  & H^{2i}_{}(X \setminus Z,  \R). 
\end{tikzcd}
\]
The image of $P(c_\bullet(\pi^* \cF/ \tors)) \cap [\tilde{X}]$ in $H^{2i}_{}(\tilde{X} \setminus E, \R) \simeq H^{2i}_{}(X \setminus Z,  \R)$
is $P(c_\bullet(\cF|_{X \setminus Z}))$.
In particular, with this definition, the definition of homology Chern polynomial is independent of $\tilde X$ and can be defined without resolution of singularities.
}
\end{myrem}
The coincidence with the algebraic case is shown in the following proposition.
\begin{myprop}
Let $X$ be a irreducible projective variety.
Let $\cF$ be a torsion-free coherent sheaf over $X$.
Assume that $X$ is smooth in codimension $k$ and $\cF$ is locally free in codimension $k$.
Let $P$ be a Chern polynomial of degree $\leq k$.
Let $A$ be a very ample ample divisor on $X$ and denote $\omega_A$ the K\"ahler form in the class of $A$.
Let $S$ be a smooth variety as complete intersection of $H_1, \cdots, H_{n-k} \in |A|$ such that $\cF$ is locally free near $S$.
(Notice that such $S$ always exists if $A$ is taken to be sufficiently ample.)
Then we have
$$\langle P(c_\bullet(\cF))^h, \omega^{2n-2k}_A \rangle=\int_S P(c_\bullet(\cF|_{S})).$$
\end{myprop}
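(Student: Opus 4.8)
The plan is to transport the whole pairing up to a smooth projective model $\tilde X$, on which $\pi^*\cF/\tors$ is a genuine vector bundle, and there reduce to classical intersection theory; the decisive point is that, under the codimension hypothesis, the preimage of the complete intersection $S$ meets neither the singular locus nor the exceptional set.

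First I would fix, exactly as in the definition of $P(c_\bullet(\cF))^h$, a resolution $\pi\colon\tilde X\to X$ with $\tilde X$ smooth projective, $\tilde\cF:=\pi^*\cF/\tors$ locally free, and $\pi$ a biholomorphism over $X\setminus Z$, where $Z$ is a closed analytic subset contained in $X_{\sing}\cup\{x:\cF\text{ is not locally free at }x\}$; since $X$ is smooth in codimension $k$ and $\cF$ is locally free in codimension $k$, we have $\dim_{\C}Z\le n-k-1$, and $P(c_\bullet(\cF))^h$ does not depend on the chosen $\pi$. As $\omega_A^{n-k}$ sits in $H^{2n-2k}(X,\R)$, only the degree-$k$ part of the Chern polynomial contributes to the pairing on either side. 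Using the definition of the homology Chern polynomial together with the adjunction $\langle\pi_*a,b\rangle=\langle a,\pi^*b\rangle$ between pushforward on homology and pullback on cohomology, and that $\pi^*\omega_A$ is a smooth closed form representing $c_1(\pi^*\cO_X(A))$, the left-hand side becomes
$$\langle P(c_\bullet(\cF))^h,\omega_A^{n-k}\rangle=\langle P(c_\bullet(\tilde\cF))\cap[\tilde X],\pi^*\omega_A^{n-k}\rangle=\int_{\tilde X}P(c_\bullet(\tilde\cF))\wedge(\pi^*\omega_A)^{n-k}.$$

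Next I would use the hypotheses on $S$: being a smooth complete intersection of $n-k$ very ample divisors on which $\cF$ is locally free, $S$ is disjoint from $Z$, hence $\tilde S:=\pi^{-1}(S)$ is disjoint from the exceptional set $E:=\pi^{-1}(Z)$. Consequently $\pi|_{\tilde S}\colon\tilde S\to S$ is an isomorphism and, since $\pi^*\cF$ is already locally free over the neighbourhood $\tilde X\setminus E$ of $\tilde S$, we get $\tilde\cF|_{\tilde S}\cong(\pi|_{\tilde S})^*(\cF|_S)\cong\cF|_S$. Writing $S=\{s_1=\dots=s_{n-k}=0\}$ with $s_i\in H^0(X,\cO_X(A))$, the scheme $\tilde S=\{\pi^*s_1=\dots=\pi^*s_{n-k}=0\}$ is a proper intersection (its dimension is $k=n-(n-k)$) of $n-k$ members of the base-point-free linear system $\pi^*|A|$ on the smooth projective $\tilde X$, and it is transverse along all of $\tilde S$ because there $\pi$ is an isomorphism carrying the transverse intersection $S\subset X\setminus Z$. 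Therefore the classical identity on $\tilde X$ gives $c_1(\pi^*\cO_X(A))^{n-k}\cap[\tilde X]=[\tilde S]$, i.e. $(\pi^*\omega_A)^{n-k}$ is cohomologous to a closed form Poincaré dual to $\tilde S$, whence
$$\int_{\tilde X}P(c_\bullet(\tilde\cF))\wedge(\pi^*\omega_A)^{n-k}=\int_{\tilde S}i_{\tilde S}^*P(c_\bullet(\tilde\cF))=\int_{\tilde S}P(c_\bullet(\tilde\cF|_{\tilde S}))=\int_S P(c_\bullet(\cF|_S)),$$
the last equality by the isomorphism $\pi|_{\tilde S}$ and $\tilde\cF|_{\tilde S}\cong\cF|_S$. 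Chaining the two displays gives the proposition.

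The step I expect to need the most care is the identification of $\tilde S=\pi^{-1}(S)$ as a genuine smooth complete intersection on $\tilde X$ whose cycle class is $c_1(\pi^*\cO_X(A))^{n-k}\cap[\tilde X]$: this is precisely where the hypotheses ``smooth in codimension $k$'' and ``locally free in codimension $k$'' are used (through $\dim_{\C}Z\le n-k-1$, which forces $S$, and hence $\tilde S$, away from the locus where $\pi$ and $\tilde\cF$ behave badly), and it is what allows us to avoid the failure of Poincaré duality on the singular $X$ by carrying out all the intersection theory upstairs on the smooth projective $\tilde X$. As in the proof of Lemma 5, the argument is insensitive to the particular choice of resolution, since $P(c_\bullet(\cF))^h$ is.
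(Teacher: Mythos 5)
Your argument is correct and follows essentially the same route as the paper: push the pairing up to the resolution $\tilde X$ via the definition of $P(c_\bullet(\cF))^h$ and adjunction, use that $S$ (hence $\tilde S=\pi^{-1}(S)$) avoids the modified locus so that $\pi|_{\tilde S}$ is an isomorphism with $\tilde\cF|_{\tilde S}\cong\cF|_S$, and identify $(\pi^*\omega_A)^{n-k}$ with the class of $\tilde S$ to restrict the integral. The only cosmetic difference is that the paper invokes Demailly's intersection theory of currents for the identity $\pi^*\omega_A^{n-k}\sim[\tilde S]$, where you instead argue via the cycle class of the transverse complete intersection $\tilde S$ on the smooth $\tilde X$; both are standard and equivalent here.
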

\begin{proof}
We have
$$\langle P(c_\bullet(\cF))^h, \omega^{2n-2k}_A \rangle=\int_{\tilde{X}} P(c_\bullet(\pi^* \cF/ \tors)) \wedge \pi^* \omega_A^{2n-2k}.$$
Without loss of generality, we can assume that $\pi$ is a biholomorphism near $S$.
In particular, $\pi^* \omega_A$ is cohomologous to $\pi^* H_i$ and $\pi^* \omega_A^{n-k}$ is cohomologous to $\pi^* H_1 \cap \cdots \cap \pi^* H^{n-k} \simeq S$ by \cite{Dem93}.
Thus the left hand side is equal to
$\int_S P(c_\bullet(\cF|_{S}))$ since $\pi^* \cF/ \tors|_S=\cF|_S$.
\end{proof}
\section{The Bogomolov inequality}
In this section, we show that the Bogomolov inequality holds over a compact, irreducible, K\"ahler space which is smooth in codimension 2.
The disadvantage of the topological proof is in substance that it does not give further information on the case of equality.
The existence of an admissible metric as defined in Bando-Siu does not seem to be a trivial question over a singular space.
Over a singular space we obtained the following version of the Bogomolov inequality.
\begin{mythm}
For a stable reflexive sheaf $\cF$ over a compact, irreducible, K\"ahler complex space $(X, \omega)$ that is smooth in codimension $2$ (e.g. if there exists a boundary divisor $\Delta$ such that $(X, \Delta)$ has terminal singularities), we have the Bogomolov inequality
$$\langle (2r c_2^h(\cF)-(r-1)c_1^{2,h}(\cF)) , \omega^{n-2} \rangle \geq 0$$
where $r$ is the generic rank of $\cF$ and $\langle \bullet, \bullet \rangle$ is the pair between $H_{2n-4}(X,  \R)$ and $H^{2n-4}(X,  \R)$.
\end{mythm}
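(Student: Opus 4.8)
The plan is to reduce the statement to the Bogomolov inequality of Bando--Siu (Theorem 1) on a smooth model of $X$, exactly as sketched in the introduction. First fix a resolution of singularities $\pi : \tilde X \to X$ with $\tilde X$ compact Kähler, $\cE := \pi^*\cF/\tors$ locally free, and $\pi$ a biholomorphism over the complement of a closed analytic subset of $X$ of codimension at least $3$; such $\pi$ exists by \cite{Hir64}, \cite{GR}, \cite{Rie}, \cite{Ros}. Let $E = \sum_i E_i$ be the exceptional divisor, so each $\pi(E_i)\subset X$ has codimension $\geq 3$. Since $X$ is smooth in codimension $2$ and $\cF$, being reflexive, is locally free in codimension $2$, the homology Chern polynomials of degree $\leq 2$ of Section 3 are defined, and by their construction
\[
2r c_2^h(\cF) - (r-1) c_1^{2,h}(\cF) = \pi_*\big( ( 2r c_2(\cE) - (r-1) c_1^2(\cE) ) \cap [\tilde X] \big).
\]
Pairing both sides with $\omega^{n-2} \in H^{2n-4}(X,\R)$ and using the projection formula, Proposition 1 (to identify the pullback class $\pi^*[\omega]$ with the class of the pulled-back form $\pi^*\omega$), the equality $\pi_*[\tilde X] = [X]$, and Poincaré duality on the smooth manifold $\tilde X$, this becomes
\[
\langle 2r c_2^h(\cF) - (r-1) c_1^{2,h}(\cF), \, \omega^{n-2} \rangle = \int_{\tilde X} ( 2r c_2(\cE) - (r-1) c_1^2(\cE) ) \wedge (\pi^*\omega)^{n-2}.
\]
It therefore suffices to prove that the integral on the right is non-negative.

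The heart of the argument is to check that $\cE$ is stable on $\tilde X$ with respect to the (merely nef) class $[\pi^*\omega]$: for every coherent subsheaf $\cG \subset \cE$ with $0 < \rk\cG < r$ one has $\mu_{\pi^*\omega}(\cG) < \mu_{\pi^*\omega}(\cE)$, where $\mu_{\pi^*\omega}(\cG) := \frac{1}{\rk\cG}\int_{\tilde X} c_1(\cG) \wedge (\pi^*\omega)^{n-1}$. By Lemma 4 this slope for $\cE$ equals the slope $\mu_\omega(\cF)$ computed on $X$. Replacing $\cG$ by its saturation only increases its slope, so we may assume $\cG$ saturated; because $\pi$ is an isomorphism away from codimension $\geq 2$ and $\cF$ is reflexive, $\cG$ agrees outside $E$ with $\pi^*\cS/\tors$ for a uniquely determined saturated subsheaf $\cS \subset \cF$ with $\rk\cS = \rk\cG$. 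Consequently $c_1(\cG)$ and $c_1(\pi^*\cS/\tors)$ differ by a combination of the exceptional classes $[E_i]$, and
\[
\int_{\tilde X} [E_i] \wedge (\pi^*\omega)^{n-1} = \int_{E_i} (\pi|_{E_i})^*(\omega^{n-1}) = 0
\]
since $\dim \pi(E_i) \leq n-3 < n-1$. Combining this with Lemma 4 applied to $\cS$ gives $\mu_{\pi^*\omega}(\cG) \leq \mu_{\pi^*\omega}(\pi^*\cS/\tors) = \mu_\omega(\cS) < \mu_\omega(\cF) = \mu_{\pi^*\omega}(\cE)$, where the strict inequality is precisely the stability of $\cF$.

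Finally, fix a Kähler form $\tilde\omega$ on $\tilde X$. The form $\pi^*\omega$ is semipositive, so $\pi^*\omega + \varepsilon\tilde\omega$ is a genuine Kähler form for every $\varepsilon > 0$, and by Toma's openness of stability \cite{Tom16} the $[\pi^*\omega]$-stability of $\cE$ persists as $(\pi^*\omega + \varepsilon\tilde\omega)$-stability for all sufficiently small $\varepsilon > 0$. As $\cE$ is locally free (hence reflexive) and stable (hence polystable), Theorem 1 applies on the compact Kähler manifold $(\tilde X, \pi^*\omega + \varepsilon\tilde\omega)$ and gives
\[
\int_{\tilde X} ( 2r c_2(\cE) - (r-1) c_1^2(\cE) ) \wedge (\pi^*\omega + \varepsilon\tilde\omega)^{n-2} \geq 0.
\]
Since the integrand is a fixed closed form, the left side is a polynomial in $\varepsilon$, and letting $\varepsilon \to 0$ yields the desired non-negativity.

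The main obstacle is the stability transfer in the middle step: one must make sure that strict slope stability of $\cF$ on the singular space $X$ really survives the passage to $\tilde X$ with respect to the degenerate polarization $[\pi^*\omega]$, and this is exactly where the hypothesis that $\pi$ be an isomorphism outside codimension $\geq 3$ (in particular $\geq 2$) is essential — it yields both the birational correspondence $\cG \leftrightarrow \cS$ of saturated subsheaves and the vanishing of the exceptional contributions to the slope. A secondary technical point is that Toma's openness result be applicable when perturbing a nef, non-Kähler class into the Kähler cone; granting that, the limiting argument is routine.
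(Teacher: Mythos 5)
Your proposal is correct and follows essentially the same route as the paper: reduce via the homology Chern classes and Proposition 1 to an integral on the resolution $\tilde X$, transfer stability to $\pi^*\cF/\tors$ with respect to the degenerate class $[\pi^*\omega]$, perturb into the K\"ahler cone using Toma's openness of stability, apply the Bando--Siu inequality (Theorem 1), and let the perturbation tend to zero. The only cosmetic difference is in the stability-transfer step, where the paper pushes subsheaves of $\pi^*\cF/\tors$ forward to $X$ and compares homology first Chern classes via Lemma 4, whereas you pull saturated subsheaves of $\cF$ back and kill the exceptional discrepancy using the codimension assumption; the two arguments are equivalent.
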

\begin{proof}
By definition, we have
$$ 2r c_2^h(\cF)-(r-1)c_1^{2,h}(\cF)=\pi_*((2r c_2(\pi^* \cF/\tors)-(r-1)c_1^2(\pi^* \cF/\tors) )\cap [\tilde{X}])$$
for some modification $\pi: \tilde{X} \to X$ such that $\tilde{X}$ is smooth and $\pi^* \cF/\tors$ is locally free.
Then by the functoriality given by Proposition 1
$$\langle (2r c_2^h(\cF)-(r-1)c_1^{2,h}(\cF)) , \omega^{n-2} \rangle = \int_{\tilde{X}} (2r c_2(\pi^* \cF/\tors)-(r-1)c_1^2(\pi^* \cF/\tors) \wedge \pi^* \omega^{n-2}.$$
By Lemma 4, $\cF$ is $\omega$-stable implies that $\pi^* \cF/\tors$ is $\pi^* \omega$-stable.
In fact for any proper torsion-free subsheaf $\cF'$ of $\pi^* \cF/\tors$,
$\pi_* \cF'$ is a subsheaf of $\pi_*(\pi^* \cF/\tors)=\cF$.
Since $\pi$ is a modification which preserves the generic rank,
$\pi_* \cF'$ is a proper subsheaf.
By the stability condition,
$$\langle c_1^h(\pi_* \cF'), \omega^{n-1} \rangle < \langle c_1^h( \cF), \omega^{n-1} \rangle. $$
On the other hand,
$c_1^h(\pi_* \cF')=\pi_*( c_1(\cF') \cap [\tilde{X}])$, $c_1^h( \cF)=\pi_*( c_1(\pi^* \cF/ \tors ) \cap [\tilde{X}])$.
Thus the above inequality is equivalent to
$$\int_{\tilde{X}} c_1( \cF') \wedge \pi^* \omega^{n-1}  < \int_{\tilde{X}} c_1( \pi^* \cF/ \tors) \wedge \pi^* \omega^{n-1}. $$
Since stability is an open condition, for $\delta>0$ small enough, $\pi^* \cF/ \tors$ is $(\pi^* \omega +\delta \tilde{\omega})$-stable where $\tilde{\omega}$ is a K\"ahler form on $\tilde{X}$ by Corollary 6.9 of \cite{Tom19}.
We sketch the proof for the convenience of the readers.
In fact, by Corollary 6.3 of \cite{Tom19}, there is a finite number of irreducible components of the Douady space containing destabilizing quotients of the vector bundle $\pi^* \cF/ \tors$ with respect to $\tilde{\omega}$, i.e. proper quotients $\cQ$ with $\mu_{\tilde{\omega}} (\cQ) \leq \mu_{\tilde{\omega}} (\pi^* \cF/ \tors)$.
Let $\cQ_1, \cdots, \cQ_k$ be the proper quotients of $\pi^* \cF/ \tors$ representing each irreducible component with
$$\mu_{\tilde{\omega}} (\cQ_i) \leq \mu_{\tilde{\omega}} (\pi^* \cF/ \tors)~~(\forall i,~~1 \leq i \leq k).$$
Since for any $1 \leq i \leq k$,
$$\delta \mapsto \mu_{
\pi^*{\omega}+\delta\tilde{\omega}} (\cQ_i)-\mu_{\pi^*{\omega}+\delta\tilde{\omega}} (\pi^* \cF/ \tors)$$
is affine and strictly positive at $\delta=0$,
it is strictly positive near $\delta=0$.
For any proper quotient $\cQ$ of $\pi^* \cF/ \tors$, either $\cQ$ is not destabilizing, in which case
$$\mu_{\pi^*{\omega}+\delta\tilde{\omega}} (\cQ) > \mu_{\pi^*{\omega}+\delta \tilde{\omega}} (\pi^* \cF/ \tors),$$
or there exists some $i$ such that $\cQ, \cQ_i$ are in the same irreducible component of the Douady space, in which case, for $\delta>0$ small enough,
$$\mu_{
\pi^*{\omega}+\delta\tilde{\omega}} (\cQ)=\mu_{
\pi^*{\omega}+\delta\tilde{\omega}} (\cQ_i)>\mu_{\pi^*{\omega}+\delta\tilde{\omega}} (\pi^* \cF/ \tors).$$
By Theorem 1 cited above, we have
$$\int_{\tilde{X}} (2r c_2(\pi^* \cF/ \tors)-(r-1)c_1^2(\pi^* \cF/ \tors)) \wedge (\pi^*\omega+\delta \tilde{\omega})^{n-2} \geq 0.$$
By letting $\delta$ tend to 0, we find
$$\int_{\tilde{X}} (2r c_2(\pi^* \cF/ \tors)-(r-1)c_1^2(\pi^* \cF/ \tors)) \wedge (\pi^*\omega)^{n-2} \geq 0.$$
This finishes the proof.
\end{proof}
\begin{myrem}
{\em 
The same proof works if we change $\omega$ into any closed semi-positive form on the K\"ahler complex space.
}
\end{myrem}
\begin{myrem}
{\em 
In a previous version of this paper, we had tried to use a resolution of a coherent sheaf by real analytic vector bundles of finite length to define Chern classes in singular cohomology.
As pointed to us by Grivaux, once the complex space is not smooth, the homological dimension is infinite. In particular, the expected resolution described above does not necessarily exist.

In particular, to define the second ``orbifold" Chern class over a compact normal complex space with klt singularities, we have to define Chern classes of a coherent sheaf over a compact orbifold.
In this case, singular (co)homology does not seem to be the appropriate (co)homology theory to work with. 
}
\end{myrem}
We also have the following version of the Fulton-Lazarsfeld inequalities.
We start by giving the definition of a nef torsion free sheaf.
\begin{mydef}
Assume that $\cF$ is a torsion free sheaf over an irreducible compact complex space $X$.
We say that $\cF$ is nef if there exists some modification
$\pi: \tilde{X} \to X$ such that $\pi^* \cF/\tors$ is a nef
 vector bundle over $\tilde{X}$ a smooth manifold, where $\tors$ means the torsion part.
\end{mydef}
\begin{myprop}
Let $(X,\omega)$ be a compact, irreducible K\"ahler space smooth in codimension $k$ of dimension $n$.
Let $\cF$ be a nef torsion free sheaf over $X$ which is locally free in codimension $k$.
Let $r$ be the generic rank of $\cF$ and assume $r \leq k$.
Then we have
$$ \langle c^h_r(\cF) , \omega^{n-r} \rangle \geq 0.$$
More generally, for any Schur polynomial $P_a$ of degree $ \leq 2r$,
$$ \langle P^h_a(c_\bullet(\cF)) , \omega^{n-r} \rangle \geq 0.$$
\end{myprop}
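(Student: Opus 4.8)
The plan is to follow the scheme of the proof of Theorem~5: pull everything back to a smooth model and invoke the Fulton--Lazarsfeld positivity inequalities there. By the definition of a nef torsion free sheaf there is a modification $\pi\colon\tilde X\to X$ with $\tilde X$ smooth such that $E:=\pi^*\cF/\tors$ is a nef vector bundle on $\tilde X$; blowing up further if needed, I may assume $\pi$ is a biholomorphism over the complement of a closed analytic subset of codimension at least $k+1$. Let $P_a$ be a Schur polynomial of degree $2r$ (e.g.\ $P_a=c_r$; a polynomial of lower degree is treated identically, pairing against the appropriate power of $\omega$). Since $\deg P_a\leq 2k$, the homology Schur polynomial $P_a^h(c_\bullet(\cF))$ is defined by the construction recalled after Lemma~4 and, by its independence of the chosen resolution, $P_a^h(c_\bullet(\cF))=\pi_*\bigl(P_a(c_\bullet(E))\cap[\tilde X]\bigr)$. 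The class $\omega^{n-r}$ comes from $H^{2n-2r}_{\dR}(X,\R)$, so combining the projection formula with the functoriality of Proposition~1 gives
$$\langle P_a^h(c_\bullet(\cF)),\omega^{n-r}\rangle=\langle P_a(c_\bullet(E))\cap[\tilde X],\pi^*\omega^{n-r}\rangle=\int_{\tilde X}P_a(c_\bullet(E))\wedge\pi^*\omega^{n-r}.$$

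Next I would remove the degeneracy of $\pi^*\omega$, exactly as in the proof of Theorem~5. Fix a Kähler form $\tilde\omega$ on $\tilde X$; since $\pi^*\omega$ is a smooth closed semi-positive $(1,1)$-form (pull-back of a smooth Kähler form under a holomorphic map), $\omega_\varepsilon:=\pi^*\omega+\varepsilon\tilde\omega$ is Kähler for every $\varepsilon>0$. The bundle $E$ is nef on the compact Kähler manifold $\tilde X$, and the partition $a$ has weight $r=\rk E$, so $P_a$ belongs to the cone of numerically positive polynomials for nef bundles; hence, by the Fulton--Lazarsfeld inequalities in the Kähler setting (Demailly--Peternell--Schneider),
$$\int_{\tilde X}P_a(c_\bullet(E))\wedge\omega_\varepsilon^{n-r}\geq 0\qquad(\varepsilon>0).$$
Letting $\varepsilon\to 0$ yields $\int_{\tilde X}P_a(c_\bullet(E))\wedge\pi^*\omega^{n-r}\geq 0$, which is the desired inequality; the special case $P_a=c_r$ gives the first assertion.

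The hard part is not an analytic estimate: the positivity of Schur polynomials of nef bundles on a compact Kähler manifold, together with the Kähler approximation $\pi^*\omega+\varepsilon\tilde\omega$, can be quoted as a black box, just as Bando--Siu's theorem was in the proof of Theorem~5. The delicate point is rather the bookkeeping around the homology Schur polynomial: one has to check that $P_a^h(c_\bullet(\cF))$ is well defined and independent of the modification $\pi$ in the degree range $\deg P_a\leq 2k$ --- which follows from the filtering property of modifications and the vanishing contribution of torsion sheaves supported on exceptional divisors, as in Lemma~5, the ambiguity between two resolutions being supported on exceptional divisors whose images have codimension $\geq k+1$ --- and that the partition $a$ satisfies the size constraints ($a_1\leq\rk E$, weight $\leq\dim\tilde X$) demanded by the Fulton--Lazarsfeld theorem, both of which are guaranteed by the hypotheses $r\leq k$ and $\deg P_a\leq 2r$.
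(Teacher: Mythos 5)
Your proposal is correct and follows exactly the route the paper takes: the paper's own proof is a one-line reference saying the argument is "analogous to the proof of the Bogomolov inequality by changing Theorem 1 into Proposition 2.1 and Theorem 2.5 of \cite{DPS94}," which is precisely your scheme of pulling back to a smooth model, applying the Demailly--Peternell--Schneider positivity of Schur polynomials for nef bundles with respect to $\pi^*\omega+\varepsilon\tilde{\omega}$, and letting $\varepsilon\to 0$. Your handling of the lower-degree case matches the paper's remark that higher-degree integrations vanish for degree reasons.
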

\begin{proof}
The proof is analogous to the proof of the Bogomolov inequality by changing Theorem 1 into Proposition 2.1 and Theorem 2.5 of \cite{DPS94}.
Notice that the integration of higher degree Chern classes is automatically 0 by a degree argument.
\end{proof}

Key words: Bogomolov inequality, coherent sheaf, Chern class.

Universität Bayreuth,
Universitätsstraße 30, 95447 Bayreuth

E-mail address: Xiaojun.Wu@uni-bayreuth.de
\end{document}